\documentclass[smallextended]{article}       
%
%
\usepackage{graphicx}
\DeclareGraphicsRule{.bmp}{bmp}{}{}
\DeclareGraphicsExtensions{.bmp} 
\usepackage{subfigure}
\usepackage{epstopdf}
\usepackage{cite}
\usepackage[centertags]{amsmath}
\usepackage{amsfonts,amssymb,newlfont,makeidx,listings,xcolor}
\usepackage[colorlinks=true,urlcolor=blue,linkcolor=blue,pdfborder={0 0 0}]{hyperref}
\usepackage[left=25mm,top=20mm,right=25mm,bottom=20mm]{geometry}

\newtheorem{theorem}{Theorem}[section]
\newtheorem{corollary}{Corollary}[section]
\newtheorem{remark}{Remark}[section]

\newenvironment{proof}{\textit{Proof}:}{\hfill$\square$}
\numberwithin{equation}{section}

\begin{document}

 \title{Quarter-symmetric non-metric connection}
\date{}
\author{\textbf{Miroslav D. Maksimovi\'c}}
\maketitle

	\noindent{University of Pri\v stina in Kosovska Mitrovica, Faculty of Sciences  and Mathematics, Department of Mathematics, Kosovska Mitrovica, Serbia,} mail: miroslav.maksimovic@pr.ac.rs

\begin{abstract}\small 
The paper will study a new quarter-symmetric non-metric connection on a generalized Riemannian manifold. It will determine the relations that the torsion tensor satisfies. The exterior derivative of the skew-symmetric part $F$ of basic tensor $G$ with respect to the Levi-Civita connection coincides with that of skew-symmetric part $F$ with respect to quarter-symmetric non-metric connection, which implies that the even-dimensional manifold endowed with non-degenerate 2-form $F$ is symplectic manifold if and only if it is closed with respect to quarter-symmetric non-metric connection. The linearly independent curvature tensors of this connection and its dual connection are determined and the properties of these tensors are discussed. Finally, the condition is given that the connection should be dual symmetric.

\vskip0.25cm
\noindent\textbf{Keywords}: generalized Riemannian manifold, Nijenhuis tensor, non-metric connection, quarter-symmetric connection, torsion tensor, symplectic manifold.

\vskip0.25cm
\noindent\textbf{MSC 2020}: 53B05, 53C05.
\end{abstract}

\section{Introduction and motivation}
The theory of linear connections holds an important place in Differential Geometry. In 1924, A. Friedmann and J. A. Schouten introduced the concept of semi-symmetric connection \cite{friedmannschouten1924}, while in 1932, H. A. Hayden defined metric connections with torsion tensor \cite{hayden1932}. E. Pak studied semi-symmetric connection on the pseudo-Riemannian manifold and given the relationship between semi-symmetric connection and Levi-Civita connection \cite{pak1969}. In paper \cite{Yano1970}, K. Yano studied semi-symmetric metric connection on the Riemannian manifold in detail. Afterward, many authors studied the application of this connection to other manifolds. With the aim of generalizing semi-symmetric connection, S. Golab introduced the notion of quarter-symmetric connection \cite{golab1975}. In \cite{mishrapandey1980}, R. S. Mishra and S. Pandey studied several types of quarter-symmetric connection on different manifolds, while in \cite{yano1982} K. Yano determined the relationship between quarter-symmetric metric connection and Levi-Civita connection.

In \cite{agashe1992}, N. Agashe and M. Chafle defined a semi-symmetric non-metric connection on a Riemannian manifold and observed properties of the curvature tensor and the Weyl projective curvature tensor. In papers \cite{chatpandey2015, dehanzhao2016}, the authors defined a new type of semi-symmetric non-metric connection given by the equation
\begin{equation*}
	\overset{1}{\nabla}_{X} Y = \overset{g}{\nabla}_{X} Y + a\pi(Y) X + b\pi(X) Y.
\end{equation*}
This connection was also studied in papers  \cite{chayil2019,cui2019}. These papers motivated us to study the quarter-symmetric connection on the generalized Riemannian manifold in the following form
\begin{equation}\label{eq:Q-Sconnection1}
	\overset{1}{\nabla}_{X} Y = \overset{g}{\nabla}_{X} Y + a\pi(Y) A X + b\pi(X) A Y, 
\end{equation}
whose torsion tensor given with
\begin{equation*}
	\overset{1}{T}(X,Y)= (a-b)(\pi(Y) A X - \pi(X) A Y),
\end{equation*}
where $a$ and $b$ are different real numbers (i.e. $a,b \in \mathbb{R}$, $a\ne b$) and $A$ is the (1,1) tensor associated with skew-symmetric tensor $F$. 

\begin{remark}
	For $a=0$ and $b\ne 0$, the quarter-symmetric connection (\ref{eq:Q-Sconnection1}) is metric (for example, in papers \cite{mishrapandey1980,yano1982,zlma2022} the case for $b=-1$ studied). For $a \ne 0$, $b=0$, the connection (\ref{eq:Q-Sconnection1}) is non-metric (for example, see \cite{bhowmik2010, dubey2010} or equations below equation (2.8) in \cite{zlma2022}). In this paper, we will study the connection (\ref{eq:Q-Sconnection1}), where neither $a$ nor $b$ are equal to zero.
\end{remark}

\section{Preliminaries}

Let $\mathcal{M}$ be an $n$-dimensional differentiable manifold and $G$ be a non-symmetric (0,2) tensor. The pair $(\mathcal{M}, G)$ is called the \textit{generalized Riemannian manifold}. Based on the non-symmetry, the \textit{basic tensor} $G$ can be represented as follows
\begin{equation*}
	G(X,Y) = g(X,Y) + F(X,Y),
\end{equation*}
where $g$ is its symmetric part and $F$ its skew-symmetric part, that is
\begin{equation*}
	g(X,Y) = \frac{1}{2} (G(X,Y) + G(Y,X) ), \quad F(X,Y) = \frac{1}{2} (G(X,Y) - G(Y,X)).
\end{equation*}
We assume that the symmetric part $g$ is non-degenerate of arbitrary signature. Symmetric part $g$ and skew-symmetric part $F$ are related as follows 
\begin{equation}\label{eq:FgA}
	F(X,Y) = g(AX,Y),
\end{equation}
where $A$ is the (1,1) tensor field associated with tensor $F$. Depending on the properties of tensor $A$, we can observe various examples of generalized Riemannian manifold (see \cite{ivanov2016}). 

In this paper, we will study the non-symmetric linear connection $\overset{1}{\nabla}$ on a generalized Riemannian manifold, i.e. linear connection $\overset{1}{\nabla}$ with torsion tensor
\begin{equation*}
	\overset{1}{T}(X,Y) = \overset{1}{\nabla}_{X} Y - \overset{1}{\nabla}_{Y} X - [X,Y],
\end{equation*}
where $[\cdot,\cdot]$ denote the Lie bracket and $X,Y$ are smooth vectors on a differentiable manifold $\mathcal{M}$. In the following, we will use the (0,3) torsion tensor defined with equation
\begin{equation*}
	\overset{1}{T}(X,Y,Z) = g(\overset{1}{T}(X,Y),Z).
\end{equation*}
The \textit{dual} connection $\overset{2}{\nabla}$ of connection $\overset{1}{\nabla}$ is defined with equation
\begin{equation*}
	\overset{2}{\nabla}_{X} Y = \overset{1}{\nabla}_{Y} X + [X,Y].
\end{equation*}

By virtue of connection $\overset{1}{\nabla}$ and its dual connection $\overset{2}{\nabla}$, one can define symmetric connection $\overset{0}{\nabla}$ given with
\begin{equation}\label{eq:symmetricconnectionnabla0}
	\overset{0}{\nabla}_{X} Y = \frac{1}{2} (\overset{1}{\nabla}_{X} Y + \overset{2}{\nabla}_{X} Y).
\end{equation}

Based on symmetric connection $\overset{0}{\nabla}$ and non-symmetric connections $\overset{1}{\nabla}$ and $\overset{2}{\nabla}$, we can observe six linearly independent curvature tensors (see \cite{mincic1979})
\begin{align*}
	\overset{\theta}{R} (X,Y)Z & = \overset{\theta}{\nabla}_{X}\overset{\theta}{\nabla}_{Y} Z - \overset{\theta}{\nabla}_{Y}\overset{\theta}{\nabla}_{X} Z - \overset{\theta}{\nabla}_{[X,Y]} Z, \; \theta=0,1,2, \\
	\overset{3}{R} (X,Y)Z & = \overset{2}{\nabla}_{X}\overset{1}{\nabla}_{Y} Z - \overset{1}{\nabla}_{Y}\overset{2}{\nabla}_{X} Z + \overset{2}{\nabla}_{\overset{1}{\nabla}_{Y} X} Z - \overset{1}{\nabla}_{\overset{2}{\nabla}_{X} Y} Z, \\
	\overset{4}{R} (X,Y)Z & = \overset{2}{\nabla}_{X}\overset{1}{\nabla}_{Y} Z - \overset{1}{\nabla}_{Y}\overset{2}{\nabla}_{X} Z + \overset{2}{\nabla}_{\overset{2}{\nabla}_{Y} X} Z - \overset{1}{\nabla}_{\overset{1}{\nabla}_{X} Y} Z, \\
	\overset{5}{R} (X,Y)Z & = \frac{1}{2} ( \overset{1}{\nabla}_{X}\overset{1}{\nabla}_{Y} Z - \overset{2}{\nabla}_{Y}\overset{1}{\nabla}_{X} Z + \overset{2}{\nabla}_{X}\overset{2}{\nabla}_{Y} Z - \overset{1}{\nabla}_{Y}\overset{2}{\nabla}_{X} Z  - \overset{1}{\nabla}_{[X,Y]} Z - \overset{2}{\nabla}_{[X,Y]} Z ).
\end{align*}

Riemannian curvature tensor $\overset{g}{R}$ with respect to Levi-Civita connection $\overset{g}{\nabla}$ is defined with equation
\begin{equation*}
	\overset{g}{R} (X,Y)Z  = \overset{g}{\nabla}_{X}\overset{g}{\nabla}_{Y} Z - \overset{g}{\nabla}_{Y}\overset{g}{\nabla}_{X} Z - \overset{g}{\nabla}_{[X,Y]} Z.
\end{equation*}

Linear connection $\overset{1}{\nabla}$ on a generalized Riemannian manifold is determined with torsion tensor and covariant derivative of symmetric part $g$.

\begin{theorem}\label{thm:connectioninGRn}\cite{ivanov2016}
	Let $(\mathcal{M}, G=g+F)$ be a generalized Riemannian manifold and $\overset{g}{\nabla}$ be a Levi-Civita connection of $g$. Let $\overset{1}{\nabla}$ be a linear connection with torsion $\overset{1}{T}$ and denote the covariant derivative of the symmetric part $g$ of tensor $G$ with $\overset{1}{\nabla} g$. Then the connection $\overset{1}{\nabla}$ is uniquely determined by the following formula
	\begin{equation}\label{eq:linearconnectionnabla1}
		\begin{split}
			g(\overset{1}{\nabla}_{X} Y, Z) = & g(\overset{g}{\nabla}_{X} Y, Z) + \frac{1}{2} ( \overset{1}{T}(X,Y,Z) + \overset{1}{T}(Z,X,Y)- \overset{1}{T}(Y,Z,X) ) \\
			& -\frac{1}{2} ( (\overset{1}{\nabla}_{X}g)(Y,Z) + (\overset{1}{\nabla}_{Y}g)(Z,X) - (\overset{1}{\nabla}_{Z}g)(Y,X) ).
		\end{split}
	\end{equation}
	The covariant derivative $\overset{1}{\nabla}F$ of the skew symmetric part $F$ of tensor $G$ is given by
	\begin{equation}\label{eq:covderofF}
		\begin{split}
			(\overset{1}{\nabla}_{X}F)(Y,Z) = & (\overset{g}{\nabla}_{X}F)(Y,Z)  + \frac{1}{2} ( \overset{1}{T}(X,Y,AZ) + \overset{1}{T}(Z,X,AY)) \\
			& +\frac{1}{2} (\overset{1}{T}(AZ,X,Y) + \overset{1}{T}(AZ,Y,X) + \overset{1}{T}(X,AY,Z) + \overset{1}{T}(Z,AY,X)  ) \\
			& + \frac{1}{2} ( (\overset{1}{\nabla}_{X}g)(AY,Z) - (\overset{1}{\nabla}_{X}g)(Y,AZ) - (\overset{1}{\nabla}_{Y}g)(AZ,X) ) \\
			& + \frac{1}{2} ( (\overset{1}{\nabla}_{Z}g)(AY,X) + (\overset{1}{\nabla}_{AZ}g)(Y,X) - (\overset{1}{\nabla}_{AY}g)(Z,X) ).
		\end{split}
	\end{equation}
	In particular, the exterior derivative $\mathrm{d}F$ of the skew symmetric part $F$ satisfies
	\begin{equation}\label{eq:exteriorderivativeofF1}
		\begin{split}
			\mathrm{d}F(X,Y,Z) = & -\overset{1}{T}(X,Y,AZ) - \overset{1}{T}(Y,Z,AX)- \overset{1}{T}(Z,X,AY) \\ 
			& + (\overset{1}{\nabla}_{X}F)(Y,Z) + (\overset{1}{\nabla}_{Y}F)(Z,X) + (\overset{1}{\nabla}_{Z}F)(X,Y).
		\end{split}
	\end{equation}
	Conversely, any three tensors $\overset{1}{T}$, $\overset{1}{\nabla} g$, $\overset{1}{\nabla} F$ satisfying (\ref{eq:covderofF}) determine a unique linear connection via (\ref{eq:linearconnectionnabla1}).
\end{theorem}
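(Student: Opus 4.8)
The plan is to recover the classical Koszul-type argument, adapted to the fact that $g$ is no longer parallel. First I would start from the definition of the torsion tensor and the compatibility defect: writing $\overset{1}{\nabla}_XY = \overset{g}{\nabla}_XY + D(X,Y)$ for an unknown $(1,2)$ tensor $D$ (the difference tensor of the two connections, well-defined since both are linear connections with the same $[\cdot,\cdot]$), I would record the two identities that $D$ must satisfy: the torsion condition $D(X,Y)-D(Y,X)=\overset{1}{T}(X,Y)$ (since $\overset{g}{\nabla}$ is torsion-free), and the metricity defect $(\overset{1}{\nabla}_Xg)(Y,Z) = -g(D(X,Y),Z)-g(D(X,Z),Y)$ (since $\overset{g}{\nabla}g=0$). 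The task is then to solve these linear equations for $D$. I would write $D(X,Y,Z):=g(D(X,Y),Z)$ and take the usual cyclic combination $D(X,Y,Z)+D(Z,X,Y)-D(Y,Z,X)$; substituting the torsion relation to turn the symmetric-in-the-last-two-slots part into torsion terms, and the metricity relation to handle the remaining symmetric combination, I would arrive at
\begin{equation*}
2D(X,Y,Z) = \overset{1}{T}(X,Y,Z)+\overset{1}{T}(Z,X,Y)-\overset{1}{T}(Y,Z,X) - (\overset{1}{\nabla}_Xg)(Y,Z)-(\overset{1}{\nabla}_Yg)(Z,X)+(\overset{1}{\nabla}_Zg)(Y,X),
\end{equation*}
which is exactly \eqref{eq:linearconnectionnabla1} after adding $g(\overset{g}{\nabla}_XY,Z)$ to both sides; non-degeneracy of $g$ gives uniqueness, and conversely plugging this $D$ back into the two defining relations verifies existence, giving the last sentence of the theorem for free.

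Next, for \eqref{eq:covderofF} I would use \eqref{eq:FgA}, namely $F(Y,Z)=g(AY,Z)$, together with the product rule: $(\overset{1}{\nabla}_XF)(Y,Z)=(\overset{1}{\nabla}_Xg)(AY,Z)+g((\overset{1}{\nabla}_XA)Y,Z)$, and similarly $(\overset{g}{\nabla}_XF)(Y,Z)=g((\overset{g}{\nabla}_XA)Y,Z)$ since $\overset{g}{\nabla}g=0$. Subtracting, the difference $(\overset{1}{\nabla}_XF)(Y,Z)-(\overset{g}{\nabla}_XF)(Y,Z)$ becomes $(\overset{1}{\nabla}_Xg)(AY,Z) + g(D(X,AY)-D(X,\cdot)\!\cdot\!)$-type terms; more precisely $g((\overset{1}{\nabla}_XA - \overset{g}{\nabla}_XA)Y,Z) = g(D(X,AY),Z) - g(A\,D(X,Y),Z) = D(X,AY,Z) - D(X,Y,AZ)$, using that $g(AU,V)=F(U,V)=-F(V,U)=-g(AV,U)$ to move $A$ across $g$. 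Then I substitute the formula for $D(\cdot,\cdot,\cdot)$ from the first part (once with arguments $X,AY,Z$ and once with $X,Y,AZ$) and collect: the torsion contributions reproduce the six $\overset{1}{T}$-terms, and the $\overset{1}{\nabla}g$-contributions reproduce the six $(\overset{1}{\nabla}g)$-terms in \eqref{eq:covderofF}. This step is bookkeeping-heavy but mechanical once the substitution is set up.

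Finally, \eqref{eq:exteriorderivativeofF1} follows from the standard identity $\mathrm{d}F(X,Y,Z) = (\overset{\bar\nabla}{\nabla}_XF)(Y,Z)+(\overset{\bar\nabla}{\nabla}_YF)(Z,X)+(\overset{\bar\nabla}{\nabla}_ZF)(X,Y)$ valid for \emph{any} torsion-free connection applied to a $2$-form, combined with the torsion-correction terms when the connection has torsion; alternatively, I would cyclically sum \eqref{eq:covderofF} over $(X,Y,Z)$, observe that the $(\overset{1}{\nabla}g)$-terms cancel in the cyclic sum by the symmetry $g(\cdot,\cdot)=g(\cdot,\cdot)$ and the antisymmetrizations built into the coefficients, and that among the torsion terms all but $-\overset{1}{T}(X,Y,AZ)-\overset{1}{T}(Y,Z,AX)-\overset{1}{T}(Z,X,AY)$ cancel pairwise using $\overset{1}{T}(U,V,W)=-\overset{1}{T}(V,U,W)$, then use $\mathrm{d}F(X,Y,Z)=(\overset{g}{\nabla}_XF)(Y,Z)+\text{cyclic}$ for the torsion-free $\overset{g}{\nabla}$. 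The main obstacle I anticipate is the second part: correctly tracking the action of $A$ through $g$ (the sign rule $g(AU,V)=-g(AV,U)$) and making sure the antisymmetrizations in \eqref{eq:covderofF} are assembled consistently, since a single misplaced sign there would propagate into \eqref{eq:exteriorderivativeofF1}. The first and third parts are essentially the classical Koszul computation and a cyclic-sum cancellation, respectively, and should go through without surprises.
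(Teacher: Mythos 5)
The paper does not actually prove this theorem: it is quoted verbatim from \cite{ivanov2016} and used as a black box, so there is no in-paper argument to compare against. Your proposal is the standard (and essentially the original) route: the Koszul-type determination of the difference tensor $D$ from the torsion condition $D(X,Y)-D(Y,X)=\overset{1}{T}(X,Y)$ and the metricity defect $(\overset{1}{\nabla}_Xg)(Y,Z)=-D(X,Y,Z)-D(X,Z,Y)$, then transporting the result to $F$ via $F=g(A\cdot,\cdot)$, then cyclically summing. I checked the three steps and the plan does reproduce \eqref{eq:linearconnectionnabla1}, \eqref{eq:covderofF} and \eqref{eq:exteriorderivativeofF1}; in particular the cancellation of the $\overset{1}{\nabla}g$-terms in the cyclic sum and the survival of exactly $-\underset{XYZ}{\sigma}\overset{1}{T}(X,Y,AZ)$ both go through.

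One concrete sign slip to fix in the second step: with the rule $g(AU,V)=-g(AV,U)$ you get $-g(A\,D(X,Y),Z)=+g(AZ,D(X,Y))=+D(X,Y,AZ)$, so
\begin{equation*}
g\bigl((\overset{1}{\nabla}_XA-\overset{g}{\nabla}_XA)Y,Z\bigr)=D(X,AY,Z)+D(X,Y,AZ),
\end{equation*}
not $D(X,AY,Z)-D(X,Y,AZ)$ as written. Combined with $(\overset{1}{\nabla}_Xg)(AY,Z)=-D(X,AY,Z)-D(X,Z,AY)$ this gives the correct net correction $(\overset{1}{\nabla}_XF)(Y,Z)-(\overset{g}{\nabla}_XF)(Y,Z)=D(X,Y,AZ)-D(X,Z,AY)$, whose substitution via the Koszul formula yields exactly the twelve terms of \eqref{eq:covderofF}; with your sign the net correction would come out as $-D(X,Y,AZ)-D(X,Z,AY)$ and \eqref{eq:covderofF} would not be reproduced. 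This is precisely the pitfall you flagged yourself, and it is the only repair needed.
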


\section{Quarter-symmetric non-metric connection}

In paper \cite{ivanov2016}, the authors proved that connection $\overset{1}{\nabla}$ preserves the basic $G$ if and only if it preserves both its symmetric part $g$ and its skew-symmetric part $F$, i.e. 
\begin{equation*}
	\overset{1}{\nabla}G=0 \Leftrightarrow \overset{1}{\nabla}g=\overset{1}{\nabla}F=0 \Leftrightarrow \overset{1}{\nabla}g=\overset{1}{\nabla}A=0.
\end{equation*}
If connection $\overset{1}{\nabla}$ preserves symmetric metric $g$, i.e. $\overset{1}{\nabla}g=0$, then it is called the \textit{metric} connection; otherwise, it is \textit{non-metric}. According to the previous equation, if linear connection $\overset{1}{\nabla}$ preserves the basic tensor $G$ then it is called the \textit{metric $A$-connection}. This paper aims to research the quarter-symmetric non-metric connection on a generalized Riemannian manifold. Recently, many papers studied quarter-symmetric connections on various manifolds (for example, see \cite{dizhao2022,pal2019, chaubeyde2019,dezhao2020,dede2011,khan2020,khan2023,han2013,tang2018,tang2019,hui2018}). Paper \cite{tang2023} deals with the application of quarter-symmetric connection to probability theory, i.e. to the financial market.

In the following theorem, we will prove the existence of the connection (\ref{eq:Q-Sconnection1}) with coefficients $a=-b=\frac{1}{2}$, i.e. we will show that equation (\ref{eq:linearconnectionnabla1}) is satisfied. Analogously, the existence of connection (\ref{eq:Q-Sconnection1}) with arbitrary coefficients $a,b \in \mathbb{R}$, $a\ne b$, can be proven, but for simpler calculation purposes, we chose $a=-b=\frac{1}{2}$. 
\begin{theorem}
	Let $(\mathcal{M}, G=g+F)$ be a generalized Riemannian manifold and $\overset{g}{\nabla}$ be a Levi-Civita connection of symmetric part $g$. Then there exists a unique linear connection $\overset{1}{\nabla}$ given with equation
	\begin{equation}\label{eq:Q-Snmetric}
		\overset{1}{\nabla}_{X} Y = \overset{g}{\nabla}_{X} Y + \frac{1}{2}\pi(Y) A X - \frac{1}{2}\pi(X) A Y, 
	\end{equation}
	whose torsion tensor is given with
	\begin{equation}\label{eq:ttofQ-Snmetric}
		\overset{1}{T}(X,Y)= \pi(Y) A X - \pi(X) A Y,
	\end{equation}
	and which satisfies
	\begin{equation}\label{eq:covderofgQ-Snmetric}
		(\overset{1}{\nabla}_{X} g)(Y,Z) = -\frac{1}{2} (\pi(Y)F(X,Z) + \pi(Z)F(X,Y)),
	\end{equation}
	where $\pi$ is a 1-form associated with vector field $P$, i.e. $\pi(X)=g(X,P)$ and $A$ is a (1,1) type tensor field associated with skew-symmetric part $F$ of basic tensor $G$, i.e. $F (X,Y)= g(A X, Y)$.
\end{theorem}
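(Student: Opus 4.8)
The plan is to verify directly that the connection defined by (\ref{eq:Q-Snmetric}) fits into the characterization of Theorem \ref{thm:connectioninGRn}. More precisely, I would compute the torsion tensor $\overset{1}{T}$ and the covariant derivative $\overset{1}{\nabla}g$ of the connection (\ref{eq:Q-Snmetric}), check that the triple $(\overset{1}{T}, \overset{1}{\nabla}g, \overset{1}{\nabla}F)$ satisfies the compatibility relation (\ref{eq:covderofF}), and then invoke the converse part of Theorem \ref{thm:connectioninGRn}, which guarantees that such a triple determines a unique linear connection via (\ref{eq:linearconnectionnabla1}). The final step is to check that plugging the computed $\overset{1}{T}$ and $\overset{1}{\nabla}g$ into (\ref{eq:linearconnectionnabla1}) reproduces exactly the right-hand side of (\ref{eq:Q-Snmetric}); this establishes both existence and uniqueness simultaneously.

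First I would compute the torsion. Since $\overset{g}{\nabla}$ is torsion-free, $\overset{1}{T}(X,Y) = \overset{1}{\nabla}_X Y - \overset{1}{\nabla}_Y X - [X,Y]$ collapses to the terms coming from the added piece, giving $\tfrac12\pi(Y)AX - \tfrac12\pi(X)AY - \tfrac12\pi(X)AY + \tfrac12\pi(Y)AX = \pi(Y)AX - \pi(X)AY$, which is (\ref{eq:ttofQ-Snmetric}). Next I would compute $(\overset{1}{\nabla}_X g)(Y,Z) = X g(Y,Z) - g(\overset{1}{\nabla}_X Y, Z) - g(Y, \overset{1}{\nabla}_X Z)$. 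Because $\overset{g}{\nabla}g=0$, the $\overset{g}{\nabla}$ contributions cancel $Xg(Y,Z)$, leaving $-\tfrac12\pi(Y)g(AX,Z) + \tfrac12\pi(X)g(AY,Z) - \tfrac12\pi(Z)g(Y,AX) + \tfrac12\pi(X)g(Y,AZ)$. Using (\ref{eq:FgA}) (so $g(AX,Z)=F(X,Z)$) and the skew-symmetry of $F$, the two $\pi(X)$ terms cancel (since $g(AY,Z)=F(Y,Z)$ and $g(Y,AZ)=-g(AZ,Y)=-F(Z,Y)=F(Y,Z)$... I should track signs carefully here), and I am left with $-\tfrac12(\pi(Y)F(X,Z)+\pi(Z)F(X,Y))$, which is (\ref{eq:covderofgQ-Snmetric}).

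The remaining and somewhat more tedious step is to confirm that the computed $\overset{1}{T}$ and $\overset{1}{\nabla}g$, together with whatever $\overset{1}{\nabla}F$ they force, are consistent with (\ref{eq:covderofF}); equivalently, I would substitute (\ref{eq:ttofQ-Snmetric}) and (\ref{eq:covderofgQ-Snmetric}) into the right-hand side of (\ref{eq:linearconnectionnabla1}) and show the result equals $g(\overset{g}{\nabla}_X Y, Z) + \tfrac12\pi(Y)g(AX,Z) - \tfrac12\pi(X)g(AY,Z)$. The antisymmetrized torsion combination $\tfrac12(\overset{1}{T}(X,Y,Z)+\overset{1}{T}(Z,X,Y)-\overset{1}{T}(Y,Z,X))$ expands into six $\pi(\cdot)F(\cdot,\cdot)$-type terms, and the $\overset{1}{\nabla}g$ combination $-\tfrac12((\overset{1}{\nabla}_Xg)(Y,Z)+(\overset{1}{\nabla}_Yg)(Z,X)-(\overset{1}{\nabla}_Zg)(Y,X))$ expands into another six; I expect massive cancellation using skew-symmetry of $F$ and the identity $F(X,Y)=g(AX,Y)$, leaving precisely the two surviving terms. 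The main obstacle is purely bookkeeping: keeping the signs straight across the roughly dozen terms and correctly exploiting $F(X,Y)=-F(Y,X)$ and $g(AX,Y)=F(X,Y)$ to collapse them. Once that algebraic identity checks out, uniqueness is immediate from Theorem \ref{thm:connectioninGRn}, since any connection with this torsion and this $\overset{1}{\nabla}g$ must be given by (\ref{eq:linearconnectionnabla1}), hence must coincide with (\ref{eq:Q-Snmetric}).
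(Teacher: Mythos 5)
Your plan is correct and is essentially the paper's proof: the paper likewise substitutes the prescribed torsion (\ref{eq:ttofQ-Snmetric}) and the prescribed $\overset{1}{\nabla}g$ from (\ref{eq:covderofgQ-Snmetric}) into formula (\ref{eq:linearconnectionnabla1}) and checks that the resulting difference tensor is $H(X,Y)=\tfrac{1}{2}\pi(Y)AX-\tfrac{1}{2}\pi(X)AY$, which yields existence and uniqueness at once; your additional step of verifying (\ref{eq:covderofF}) is superfluous, since that identity holds automatically for any genuine connection by the first part of Theorem \ref{thm:connectioninGRn}. One small correction to the sign chain in your aside: since $g$ is symmetric, $g(Y,AZ)=g(AZ,Y)=F(Z,Y)=-F(Y,Z)$ (not $+F(Y,Z)$), and it is precisely this minus sign that makes the two $\pi(X)$ terms cancel against $+\tfrac{1}{2}\pi(X)g(AY,Z)=+\tfrac{1}{2}\pi(X)F(Y,Z)$.
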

\begin{proof} Linear connection $\overset{1}{\nabla}$ can be written in the form
	\begin{equation*}
		\overset{1}{\nabla}_{X} Y = \overset{g}{\nabla}_{X} Y + H(X,Y),
	\end{equation*}
	i.e. 
	\begin{equation*}
		g(\overset{1}{\nabla}_{X} Y, Z) =  g(\overset{g}{\nabla}_{X} Y, Z) + H(X,Y,Z),
	\end{equation*}
	and based on equation (\ref{eq:linearconnectionnabla1}) we have
	\begin{equation*}
		\begin{split}
			H(X,Y,Z):=g(H(X,Y),Z) = & \frac{1}{2} ( \overset{1}{T}(X,Y,Z) + \overset{1}{T}(Z,X,Y)- \overset{1}{T}(Y,Z,X) ) \\
			& -\frac{1}{2} ( (\overset{1}{\nabla}_{X}g)(Y,Z) + (\overset{1}{\nabla}_{Y}g)(Z,X) - (\overset{1}{\nabla}_{Z}g)(Y,X) ).
		\end{split}
	\end{equation*}
	Now we will determine tensor $H$ so that the previous equation is satisfied. Equation (\ref{eq:covderofgQ-Snmetric}) implies the following relations
	\begin{align*}
		(\overset{1}{\nabla}_{Y} g)(Z,X) = & - \frac{1}{2} (\pi(Z)F(Y,X) +\pi(X)F(Y,Z)), 
		\\
		(\overset{1}{\nabla}_{Z} g)(Y,X) = & - \frac{1}{2} (\pi(Y)F(Z,X) +\pi(X)F(Z,Y)).
	\end{align*} 
	From (\ref{eq:ttofQ-Snmetric}), it follows
	\begin{equation*}
		\overset{1}{T}(X,Y,Z)= \pi(Y) F(X,Z) - \pi(X) F(Y,Z),
	\end{equation*}
	and further
	\begin{align*} 
		\overset{1}{T}(Z,X,Y) = & \pi(X) F(Z,Y) - \pi(Z) F(X,Y), \\
		\overset{1}{T}(Y,Z,X) = & \pi(Z) F(Y,X) - \pi(Y) F(Z,X).
	\end{align*}
	Combining the previous six equations and equation (\ref{eq:covderofgQ-Snmetric}) we have
	\begin{equation*}
		H(X,Y,Z) = \frac{1}{2}\pi(Y)F(X,Z)- \frac{1}{2}\pi(X)F(Y,Z),
	\end{equation*}
	from which
	\begin{equation*}
		H(X,Y) = \frac{1}{2}\pi(Y)AX- \frac{1}{2}\pi(X)AY, 
	\end{equation*}
	and thereby, we proved the theorem.
\end{proof}

According to equations (\ref{eq:ttofQ-Snmetric}) and (\ref{eq:covderofgQ-Snmetric}), connection $\overset{1}{\nabla}$ given with (\ref{eq:Q-Snmetric}) is called a \textit{quarter-symmetric non-metric connection}. The 1-form $\pi$ is a \textit{generator} of that connection.

For covariant derivative of skew-symmetric part $F$ we have
\begin{equation*}
	\begin{split}
		(\overset{1}{\nabla}_{X} F)(Y,Z) = & XF(Y,Z)- F(\overset{1}{\nabla}_{X} Y, Z) - F( Y,\overset{1}{\nabla}_{X} Z) \\
		= & (\overset{g}{\nabla}_{X} F)(Y,Z) - \frac{1}{2}F(\pi(Y)AX- \pi(X)AY) - \frac{1}{2}F(Y,\pi(Z)AX- \pi(X)AZ), 
	\end{split}
\end{equation*}
i.e.
\begin{equation}\label{eq:covderofF2}
	(\overset{1}{\nabla}_{X} F)(Y,Z) = (\overset{g}{\nabla}_{X} F)(Y,Z) + \frac{1}{2} (\pi(Y)g(AX,AZ)  - \pi(Z)g(AX,AY) ),
\end{equation}
from which we see that connection (\ref{eq:Q-Snmetric}) does not preserve the tensor $F$. By adding equations (\ref{eq:covderofgQ-Snmetric}) and (\ref{eq:covderofF2}), we obtain the covariant derivative of basic tensor $G$
\begin{equation*}
	\begin{split}
		(\overset{1}{\nabla}_{X} G)(Y,Z) & = (\overset{1}{\nabla}_{X} g)(Y,Z) + (\overset{1}{\nabla}_{X} F)(Y,Z) \\
		& = (\overset{g}{\nabla}_{X} F)(Y,Z) + \frac{1}{2} (\pi(Y)(g(AX,AZ) - F(X,Z))  - \pi(Z)(g(AX,AY)+F(X,Y)) ).
	\end{split}
\end{equation*}

The covariant derivative of tensor $A$ is given by 
\begin{align*}
	(\overset{1}{\nabla}_{X} A )Y & = \overset{1}{\nabla}_{X} A Y - A(\overset{1}{\nabla}_{X} Y) \\
	& = \overset{g}{\nabla}_{X} A Y + \frac{1}{2}\pi(AY)AX - \frac{1}{2}\pi(X)A^2Y - A(\overset{g}{\nabla}_{X} Y \frac{1}{2}\pi(Y)AX - \frac{1}{2}\pi(X)AY),
\end{align*}
i.e.
\begin{equation*}
	(\overset{1}{\nabla}_{X} A )Y = (\overset{g}{\nabla}_{X} A )Y + \frac{1}{2}(\pi(AY)AX-\pi(Y)A^2X).
\end{equation*}

For covariant derivative of 1-form $\pi$ with respect to quarter-symmetric non-metric connection (\ref{eq:Q-Snmetric}), the following equation holds 
\begin{equation}\label{eq:covderivofpi-QSnMC}
	(\overset{1}{\nabla}_{X} \pi )(Y) = (\overset{g}{\nabla}_{X} \pi )(Y) + \frac{1}{2} \pi(X)\pi(AY) - \frac{1}{2} \pi(AX)\pi(Y).
\end{equation}

A vector field $P$ is a \textit{Killing} if ${\mathcal{L}}_{P} g=0$, where $\mathcal{L}$ is the Lie derivative with respect to the Levi-Civita connection. Based on the previous equation, we will prove the following theorems.

\begin{theorem}
	Let $(\mathcal{M}, G=g+F)$ be a generalized Riemannian manifold with the quarter-symmetric non-metric connection $\overset{1}{\nabla}$ given with (\ref{eq:Q-Snmetric}). A necessary and sufficient condition for the vector $P$ to be Killing is that 
	\begin{equation*}
		(\overset{1}{\nabla}_{X} \pi )(Y) + (\overset{1}{\nabla}_{Y} \pi )(X)=0
	\end{equation*}
	holds.
\end{theorem}
\begin{proof}
	The Lie derivative of $g$ with respect to the Levi-Civita connection is given by
	\begin{equation*}
		({\mathcal{L}}_{P} g) (X,Y) = g(\overset{g}{\nabla}_{X} P,Y) + g(X, \overset{g}{\nabla}_{Y} P)
	\end{equation*}
	i.e.
	\begin{equation*}
		({\mathcal{L}}_{P} g) (X,Y) = (\overset{g}{\nabla}_{X} \pi )(Y) + (\overset{g}{\nabla}_{Y} \pi )(X).
	\end{equation*}
	Based on equation (\ref{eq:covderivofpi-QSnMC}), we obtain
	\begin{equation*}
		({\mathcal{L}}_{P} g) (X,Y)= (\overset{1}{\nabla}_{X} \pi )(Y) + (\overset{1}{\nabla}_{Y} \pi )(X).
	\end{equation*}
\end{proof}

\begin{theorem}
	Let $(\mathcal{M}, G=g+F)$ be a generalized Riemannian manifold with the quarter-symmetric non-metric connection $\overset{1}{\nabla}$ given with (\ref{eq:Q-Snmetric}). Then the following relation holds
	\begin{equation*}
		(\overset{1}{\mathcal{L}}_{P} g) (X,Y) = ({\mathcal{L}}_{P} g) (X,Y) + \pi(X)\pi(AY) + \pi(AX)\pi(Y),
	\end{equation*}
	where $\overset{1}{\mathcal{L}}$ and ${\mathcal{L}}$ denote the Lie derivatives with respect to $\overset{1}{\nabla}$ and $\overset{g}{\nabla}$, respectively.
\end{theorem}
\begin{proof}
	The Lie derivative of $g$ with respect to the connection $\overset{1}{\nabla}$ is given by
	\begin{align*}
		(\overset{1}{\mathcal{L}}_{P} g) (X,Y) & = Pg(X,Y) - g(\overset{1}{\nabla}_{P}X - \overset{1}{\nabla}_{X} P,Y) - g(X, \overset{1}{\nabla}_{P}Y -  \overset{1}{\nabla}_{Y} P) \\
		& = (\overset{1}{\nabla}_{P} g)(X,Y) + g(\overset{1}{\nabla}_{X} P,Y) + g(X, \overset{1}{\nabla}_{Y} P).
	\end{align*}
	By using equation (\ref{eq:covderofgQ-Snmetric}), we have
	\begin{equation*}
		(\overset{1}{\nabla}_{P} g)(X,Y) = \frac{1}{2} (\pi(X)\pi(AY) + \pi(AX)\pi(Y)).
	\end{equation*}
	If we consider the previous two equations and (\ref{eq:Q-Snmetric}), we get
	\begin{align*}
		(\overset{1}{\mathcal{L}}_{P} g) (X,Y) & = g(\overset{g}{\nabla}_{X} P,Y) + g(X, \overset{g}{\nabla}_{Y} P) +  \pi(X)\pi(AY) + \pi(AX)\pi(Y)
		\\ & = ({\mathcal{L}}_{P} g) (X,Y) + \pi(X)\pi(AY) + \pi(AX)\pi(Y).
	\end{align*}
\end{proof}

Directly based on the previous theorem, we have the following statement.
\begin{corollary}
	Let $(\mathcal{M}, G=g+F)$ be a generalized Riemannian manifold with the quarter-symmetric non-metric connection $\overset{1}{\nabla}$ given with (\ref{eq:Q-Snmetric}). If the vector field $P$ is Killing then 
	\begin{equation*}
		(\overset{1}{\mathcal{L}}_{P} g) (X,Y) = \pi(X)\pi(AY) + \pi(AX)\pi(Y).
	\end{equation*}
\end{corollary}

The dual connection $\overset{2}{\nabla}$ of quarter-symmetric non-metric connection $\overset{1}{\nabla}$ is determined with equation
\begin{equation}\label{eq:Q-S-nmetric2}
	\overset{2}{\nabla}_{X} Y = \overset{g}{\nabla}_{X} Y  - \frac{1}{2} \pi(Y) A X + \frac{1}{2} \pi(X)AY.
\end{equation}

Connection $\overset{2}{\nabla}$ is also non-metric and satisfies the following relations
\begin{align*}
	(\overset{2}{\nabla}_{X} g)(Y,Z) & = \frac{1}{2} (\pi(Y)F(X,Z) + \pi(Z)F(X,Y)), \\
	(\overset{2}{\nabla}_{X} F)(Y,Z) & = (\overset{g}{\nabla}_{X} F)(Y,Z) - \frac{1}{2} (\pi(Y)g(AX,AZ)  - \pi(Z)g(AX,AY) ), \\
	(\overset{2}{\nabla}_{X} G)(Y,Z) & =  (\overset{g}{\nabla}_{X} F)(Y,Z) - \frac{1}{2} (\pi(Y)(g(AX,AZ) - F(X,Z))  - \pi(Z)(g(AX,AY)+F(X,Y)) ), \\
	(\overset{2}{\nabla}_{X} A )Y & = (\overset{g}{\nabla}_{X} A )Y - \frac{1}{2}(\pi(AY)AX-\pi(Y)A^2X).
\end{align*}

Based on equations (\ref{eq:symmetricconnectionnabla0}) and (\ref{eq:Q-S-nmetric2}), we conclude that the symmetric connection $\overset{0}{\nabla}$ coincides with Levi-Civita connection $\overset{g}{\nabla}$, i.e. the following holds 
\begin{equation}\label{eq:Q-S-nmetric0}
	\overset{0}{\nabla}_{X} Y = \overset{g}{\nabla}_{X} Y.
\end{equation}

In the following assertion, we state some identities for torsion tensor $\overset{1}{T}$ given with (\ref{eq:ttofQ-Snmetric}).

\begin{theorem}
	The torsion tensor of quarter-symmetric non-metric connection (\ref{eq:Q-Snmetric}) in the generalized Riemannian manifold satisfies the following relations
	\begin{align} \label{eq:identity1}
		\underset{XYZ}{\sigma} \overset{1}{T}(X,Y,Z) & = -2 \underset{XYZ}{\sigma} \pi(X)F(Y,Z),
		\\ \label{eq:identity2}
		2 \underset{XYZ}{\sigma} \pi(X)F(AY,AZ) & = - \underset{XYZ}{\sigma} (\overset{1}{T}(AX,Y,AZ)  + \overset{1}{T}(X,AY,AZ)),
		\\
		\label{eq:sigmaT1(T1)}
		\underset{XYZ}{\sigma}  \overset{1}{T}( \overset{1}{T}(X,Y),Z) &  = \underset{XYZ}{\sigma}    \pi (X) (\pi(AY) A Z - \pi(AZ) A Y ) = \underset{XYZ}{\sigma}    \pi (AX)\overset{1}{T}(Y,Z),	
		\\ \label{eq:identity3}
		\underset{XYZ}{\sigma} \overset{1}{T}(X,Y,AZ) & = 0, 	
		\\ 
		\underset{XYZ}{\sigma} \overset{1}{T}(AX,AY,Z) & = 0, 
	\end{align}
	where $\underset{XYZ}{\sigma}$ denote the cyclic sum with respect to the vector fields $X,Y,Z$.	
\end{theorem}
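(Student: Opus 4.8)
The plan is to verify each of the five identities by direct substitution of the explicit torsion formula (\ref{eq:ttofQ-Snmetric}), namely $\overset{1}{T}(X,Y,Z)=\pi(Y)F(X,Z)-\pi(X)F(Y,Z)$, together with the basic algebraic facts $F(X,Y)=g(AX,Y)=-g(X,AY)$ and the skew-symmetry $F(X,Y)=-F(Y,X)$. The computations are all of the same flavor: expand, collect the cyclic sum, and exploit cancellations coming from skew-symmetry of $F$. I would first record the three cyclic rotations of $\overset{1}{T}(X,Y,Z)$ (these already appear in the proof of the previous theorem) so they can be reused throughout.

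For (\ref{eq:identity1}), I would add $\overset{1}{T}(X,Y,Z)+\overset{1}{T}(Y,Z,X)+\overset{1}{T}(Z,X,Y)$; each term contributes $\pi(\cdot)F(\cdot,\cdot)-\pi(\cdot)F(\cdot,\cdot)$, and after relabeling via skew-symmetry of $F$ the six terms collapse to $-2\underset{XYZ}{\sigma}\pi(X)F(Y,Z)$. For (\ref{eq:identity3}), $\underset{XYZ}{\sigma}\overset{1}{T}(X,Y,AZ)=\underset{XYZ}{\sigma}\bigl(\pi(Y)F(X,AZ)-\pi(X)F(Y,AZ)\bigr)$; writing $F(X,AZ)=g(AX,AZ)$ which is \emph{symmetric} in $X,Z$ (since $g(AX,AZ)=-g(A^2X,Z)=F(Z,AX)$ as the paper already noted), the cyclic sum of $\pi(Y)g(AX,AZ)$ over $XYZ$ equals that of $\pi(Y)g(AZ,AX)$, and the two groups of three terms cancel pairwise, giving $0$. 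The identity $\underset{XYZ}{\sigma}\overset{1}{T}(AX,AY,Z)=0$ is obtained the same way: $\overset{1}{T}(AX,AY,Z)=\pi(AY)F(AX,Z)-\pi(AX)F(AY,Z)$, and since $F(AX,Z)=g(A^2X,Z)$ is symmetric in... here one must be slightly more careful, but the cyclic sum still telescopes because each summand reappears with opposite sign after one rotation. Identity (\ref{eq:identity2}) is the symmetrized counterpart of (\ref{eq:identity1}): apply the pattern of (\ref{eq:identity1}) after replacing arguments by $AX,AY,AZ$ in the appropriate slots and using $F(AY,AZ)=g(A^2Y,AZ)$, then match signs.

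The only identity requiring genuine bookkeeping is (\ref{eq:sigmaT1(T1)}). I would compute $\overset{1}{T}(\overset{1}{T}(X,Y),Z)$ by first noting $\overset{1}{T}(X,Y)=\pi(Y)AX-\pi(X)AY$ (a vector field), then $\overset{1}{T}(W,Z,\cdot)$ with $W=\overset{1}{T}(X,Y)$ gives, after pairing with an auxiliary $g(\cdot,\cdot)$, the expression $\pi(Z)F(W,\cdot)-\pi(W)F(Z,\cdot)$; but the cleanest route is to use the second equality claimed in (\ref{eq:sigmaT1(T1)}), i.e. show $\underset{XYZ}{\sigma}\overset{1}{T}(\overset{1}{T}(X,Y),Z)=\underset{XYZ}{\sigma}\pi(AX)\overset{1}{T}(Y,Z)$ directly, since $\pi(\overset{1}{T}(X,Y))=\pi(Y)\pi(AX)-\pi(X)\pi(AY)$ and $\overset{1}{T}(\overset{1}{T}(X,Y),Z)=\pi(Z)A(\overset{1}{T}(X,Y))-\pi(\overset{1}{T}(X,Y))AZ$ expands into four terms involving $\pi(\cdot)\pi(A\cdot)A(\cdot)$; regrouping the cyclic sum and recognizing $\pi(AY)AZ-\pi(AZ)AY$ as $\overset{1}{T}$ evaluated on $AY,Z$-type arguments yields the middle expression, and a final relabel gives the right-hand form. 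I expect the main obstacle to be purely organizational: keeping track of which of the $2$-to-$6$ terms in each cyclic sum cancel against which, and correctly using $g(AX,AY)=-g(A^2X,Z)$ type substitutions in the slots where $A$ has been inserted; there is no conceptual difficulty, only the risk of a sign error, so I would present (\ref{eq:sigmaT1(T1)}) with a couple of intermediate displayed lines and treat the other four as short one-line verifications.
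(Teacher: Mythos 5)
Your proposal is correct and follows essentially the same route as the paper, whose proof is just the one-line remark that the identities follow by direct substitution of the torsion formula (\ref{eq:ttofQ-Snmetric}) together with the skew-symmetry of $F$ and relation (\ref{eq:FgA}); your extra observation that $g(AX,AZ)$ and $g(A^2X,Z)$ are symmetric in $X,Z$ (because $A$ is $g$-skew-adjoint) is exactly the cancellation mechanism needed for the last two identities, and your expansion of $\overset{1}{T}(\overset{1}{T}(X,Y),Z)$ for (\ref{eq:sigmaT1(T1)}) works since the $A^2$-terms vanish under the cyclic sum.
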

\begin{proof} Using torsion tensor (\ref{eq:ttofQ-Snmetric}), the skew-symmetry of tensor $F$ and equation (\ref{eq:FgA}), the previous identities are easily proved.
\end{proof}

\begin{remark}
	The identities from the previous theorem are true for any quarter-symmetric connection with a torsion tensor (\ref{eq:ttofQ-Snmetric}).
\end{remark}

Let $\overset{1}{\mathrm{d}}{F}$ be the exterior derivative of $F$ with respect to quarter-symmetric non-metric connection $\overset{1}{\nabla}$, i.e.
\begin{equation*}
	\overset{1}{\mathrm{d}}{F} (X,Y,Z) = (\overset{1}{\nabla}_{X}F)(Y,Z) + (\overset{1}{\nabla}_{Y}F)(Z,X) + (\overset{1}{\nabla}_{Z}F)(X,Y).
\end{equation*}
By the cyclic sum of equation (\ref{eq:covderofF2}) we obtain
\begin{equation}\label{eq:dF1=dF}
	\begin{split}
		\overset{1}{\mathrm{d}}{F}(X,Y,Z) & = (\overset{1}{\nabla}_{X}F)(Y,Z) + (\overset{1}{\nabla}_{Y}F)(Z,X) + (\overset{1}{\nabla}_{Z}F)(X,Y) \\
		& = (\overset{g}{\nabla}_{X}F)(Y,Z) + (\overset{g}{\nabla}_{Y}F)(Z,X) + (\overset{g}{\nabla}_{Z}F)(X,Y) = \mathrm{d}F(X,Y,Z).
	\end{split}
\end{equation}
\begin{theorem}\label{thm:dF1=dF}
	Let $(\mathcal{M}, G=g+F)$ be a generalized Riemannian manifold with the quarter-symmetric non-metric connection $\overset{1}{\nabla}$ given with (\ref{eq:Q-Snmetric}). The exterior derivative of skew-symmetric part $F$ with respect to Levi-Civita connection $\overset{g}{\nabla}$ coincides with that of skew-symmetric part $F$ with respect to quarter-symmetric non-metric connection $\overset{1}{\nabla}$.
\end{theorem}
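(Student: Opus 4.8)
The plan is to read the statement off directly from the pointwise comparison formula (\ref{eq:covderofF2}). Recall that for the quarter-symmetric non-metric connection one has
\[
(\overset{1}{\nabla}_{X} F)(Y,Z) = (\overset{g}{\nabla}_{X} F)(Y,Z) + \frac{1}{2}\bigl(\pi(Y)g(AX,AZ) - \pi(Z)g(AX,AY)\bigr),
\]
and similarly for the cyclic permutations $(\overset{1}{\nabla}_{Y} F)(Z,X)$ and $(\overset{1}{\nabla}_{Z} F)(X,Y)$. Adding these three identities, the left-hand side is by definition $\overset{1}{\mathrm{d}}{F}(X,Y,Z)$, the Levi-Civita terms on the right assemble into $\mathrm{d}F(X,Y,Z)$, and so everything comes down to the cyclic sum of the correction terms.

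The key (and only) step is to check that
\[
\underset{XYZ}{\sigma}\bigl(\pi(Y)g(AX,AZ) - \pi(Z)g(AX,AY)\bigr) = 0.
\]
I would do this by expanding the cyclic sum into its six summands and pairing them using only the symmetry $g(AX,AZ)=g(AZ,AX)$ of $g$: the summand $\pi(Y)g(AX,AZ)$ produced by $(\overset{1}{\nabla}_{X} F)(Y,Z)$ cancels the summand $-\pi(Y)g(AZ,AX)$ produced by $(\overset{1}{\nabla}_{Z} F)(X,Y)$, and the remaining four cancel in the same fashion. This yields $\overset{1}{\mathrm{d}}{F}(X,Y,Z)=\mathrm{d}F(X,Y,Z)$, which is precisely equation (\ref{eq:dF1=dF}) and proves the theorem.

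Alternatively, the computation can be bypassed altogether: applying the general formula (\ref{eq:exteriorderivativeofF1}) of Theorem \ref{thm:connectioninGRn} to the present connection gives $\mathrm{d}F(X,Y,Z) = -\underset{XYZ}{\sigma}\overset{1}{T}(X,Y,AZ) + \overset{1}{\mathrm{d}}{F}(X,Y,Z)$, and the cyclic torsion term vanishes by identity (\ref{eq:identity3}), giving $\mathrm{d}F = \overset{1}{\mathrm{d}}{F}$ at once. There is no genuine obstacle here: the result is an essentially immediate consequence of the explicit shape (\ref{eq:Q-Snmetric}) of the connection, the point being simply that the tensor by which $\overset{1}{\nabla}F$ differs from $\overset{g}{\nabla}F$ is ``symmetric enough'' to drop out under antisymmetrization; the only care needed is the bookkeeping of signs and of the ordering of the arguments in $g(A\cdot,A\cdot)$ when permuting $X,Y,Z$.
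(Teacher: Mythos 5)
Your proof is correct and follows the paper's own argument exactly: the paper likewise obtains the result by taking the cyclic sum of (\ref{eq:covderofF2}) and observing that the correction terms cancel via the symmetry of $g$. Your alternative route through (\ref{eq:exteriorderivativeofF1}) and (\ref{eq:identity3}) is also precisely what the paper notes in the remark following Theorem \ref{thm:N=N1}.
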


In other words, the 2-form $F$ is closed with respect to the Levi-Civita connection if and only if it is closed with respect to the quarter-symmetric non-metric connection (\ref{eq:Q-Snmetric}). Based on the fact that the even-dimensional manifold with closed form $F$ (with respect to the Levi-Civita connection) is a symplectic manifold, we have the following statement.

\begin{theorem}
	Let $(\mathcal{M}, G=g+F)$ be a generalized Riemannian manifold with the quarter-symmetric non-metric connection (\ref{eq:Q-Snmetric}) and $F$ be a non-degenerate 2-form. If the manifold $\mathcal{M}$ is even-dimensional, then the pair $(\mathcal{M}, F)$ is a symplectic manifold if and only if the skew-symmetric part $F$ is closed with respect to the quarter-symmetric non-metric connection (\ref{eq:Q-Snmetric}).
\end{theorem}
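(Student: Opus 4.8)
The plan is to obtain this statement as an immediate corollary of Theorem \ref{thm:dF1=dF}. Recall that a symplectic manifold is a pair $(\mathcal{M},\omega)$ in which $\mathcal{M}$ is even-dimensional and $\omega$ is a $2$-form that is both non-degenerate and closed, $\mathrm{d}\omega=0$. Here the candidate symplectic form is $\omega=F$, the skew-symmetric part of the generalized metric $G$, and the even-dimensionality of $\mathcal{M}$ is assumed outright. The non-degeneracy of $F$ is a purely pointwise, algebraic condition (equivalently, invertibility of the associated $(1,1)$ tensor $A$ in $F(X,Y)=g(AX,Y)$) that involves no connection whatsoever; I would take it as part of the standing data, consistently with the terminology already used in the preceding theorem. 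Thus the only connection-dependent ingredient in the definition of ``symplectic'' is the closedness condition, and the whole content of the theorem reduces to showing that $F$ is closed in the classical sense, $\mathrm{d}F=0$, if and only if it is closed with respect to $\overset{1}{\nabla}$, i.e. $\overset{1}{\mathrm{d}}F=0$.

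First I would recall that for a torsion-free connection the exterior derivative of a $2$-form is the cyclic alternation of its covariant derivative, so $\mathrm{d}F(X,Y,Z)=(\overset{g}{\nabla}_{X}F)(Y,Z)+(\overset{g}{\nabla}_{Y}F)(Z,X)+(\overset{g}{\nabla}_{Z}F)(X,Y)$, which is exactly the expression appearing on the right of (\ref{eq:dF1=dF}). Then by Theorem \ref{thm:dF1=dF}, equivalently by equation (\ref{eq:dF1=dF}) obtained from the cyclic sum of (\ref{eq:covderofF2}), one has $\overset{1}{\mathrm{d}}F(X,Y,Z)=\mathrm{d}F(X,Y,Z)$ for all vector fields $X,Y,Z$. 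Hence $\overset{1}{\mathrm{d}}F=0$ holds identically precisely when $\mathrm{d}F=0$ holds identically.

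Putting these together: since $\mathcal{M}$ is even-dimensional and $F$ is a non-degenerate $2$-form, the pair $(\mathcal{M},F)$ is a symplectic manifold exactly when $\mathrm{d}F=0$, and by the previous paragraph this is equivalent to $\overset{1}{\mathrm{d}}F=0$, i.e. to $F$ being closed with respect to the quarter-symmetric non-metric connection (\ref{eq:Q-Snmetric}); this establishes both implications at once. I do not expect a genuine obstacle: the result is a direct consequence of the connection-independence of $\mathrm{d}F$ proved in Theorem \ref{thm:dF1=dF}. The only point that needs a little care is the bookkeeping — isolating which parts of the definition of a symplectic structure are connection-dependent (only the closedness) and which are not (the even-dimensionality of $\mathcal{M}$ and the non-degeneracy of $F$) — so that the equivalence is stated correctly.
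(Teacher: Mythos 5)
Your proposal is correct and follows exactly the paper's route: it derives the statement as an immediate corollary of Theorem \ref{thm:dF1=dF} (equivalently, equation (\ref{eq:dF1=dF})), which gives $\overset{1}{\mathrm{d}}F=\mathrm{d}F$, so closedness with respect to $\overset{1}{\nabla}$ and with respect to $\overset{g}{\nabla}$ coincide. Your explicit remark that the non-degeneracy of $F$ (invertibility of $A$) is a connection-independent standing assumption is a point the paper leaves implicit, and is a welcome clarification rather than a deviation.
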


The Nijenhuis tensor (or the torsion) of tensor $A$ is the type of (1,2) tensor and plays an important role in almost complex and almost para-complex geometry. If tensor $A$ has the property $A^2=-1$ (respectively, $A^2=1$), then it is called an \textit{almost complex structure} (respectively, \textit{almost para-complex structure}). The vanishing of the Nijenhuis tensor of the almost complex structure is equivalent with the integrability of the almost complex structure \cite{newlander1957}. 

In the generalized Riemannian manifold, the Nijenhus tensor can be represent as follows (see \cite{ivanov2016}, equation (2.17))
\begin{equation}\label{eq:N(X,Y)}
	N(X,Y)= \overset{1}{N}(X,Y) -\overset{1}{T}(AX,AY) - A^2 \overset{1}{T}(X,Y) + A\overset{1}{T}(AX,Y) + A\overset{1}{T}(X,AY),
\end{equation}
where $\overset{1}{N}$ denotes the (1,2) type tensor given with equation
\begin{equation}\label{eq:N1(X,Y)}
	\overset{1}{N}(X,Y) = (\overset{1}{\nabla}_{AX} A)Y - (\overset{1}{\nabla}_{AY} A)X - A(\overset{1}{\nabla}_{X} A)Y + A(\overset{1}{\nabla}_{Y} A)X.
\end{equation}

Since for torsion tensor (\ref{eq:ttofQ-Snmetric}) it holds that
\begin{equation*}
	-\overset{1}{T}(AX,AY) - A^2 \overset{1}{T}(X,Y) + A\overset{1}{T}(AX,Y) + A\overset{1}{T}(X,AY)=0,
\end{equation*}
from equation (\ref{eq:N(X,Y)}), it follows
\begin{equation*}
	N(X,Y)= \overset{1}{N}(X,Y).
\end{equation*}
With this equation, we proved the following theorem.
\begin{theorem}\label{thm:N=N1}
	In the generalized Riemannian manifold with quarter-symmetric non-metric connection (\ref{eq:Q-Snmetric}), Nijenhuis tensor $N$ coincides with  tensor $\overset{1}{N}$ given with (\ref{eq:N1(X,Y)}).
\end{theorem}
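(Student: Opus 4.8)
The plan is to read off the statement from the general decomposition (\ref{eq:N(X,Y)}) of the Nijenhuis tensor, which is valid for an arbitrary linear connection and hence applies verbatim to our quarter-symmetric non-metric connection $\overset{1}{\nabla}$, with $\overset{1}{N}$ the $(1,2)$ tensor defined in (\ref{eq:N1(X,Y)}). In view of (\ref{eq:N(X,Y)}), proving $N=\overset{1}{N}$ is exactly the assertion that the four torsion terms cancel, i.e.
\[
-\overset{1}{T}(AX,AY) - A^2 \overset{1}{T}(X,Y) + A\overset{1}{T}(AX,Y) + A\overset{1}{T}(X,AY)=0 .
\]

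To verify this, I would substitute the explicit torsion $\overset{1}{T}(U,V)=\pi(V)AU-\pi(U)AV$ from (\ref{eq:ttofQ-Snmetric}) into each of the four terms and use the linearity of $A$. Each term then becomes a linear combination of the vectors $A^2X$, $A^2Y$, $A^3X$, $A^3Y$ with coefficients among $\pi(X),\pi(Y),\pi(AX),\pi(AY)$; for instance $A\overset{1}{T}(AX,Y)=\pi(Y)A^3X-\pi(AX)A^2Y$, $A\overset{1}{T}(X,AY)=\pi(AY)A^2X-\pi(X)A^3Y$, $A^2\overset{1}{T}(X,Y)=\pi(Y)A^3X-\pi(X)A^3Y$, and $\overset{1}{T}(AX,AY)=\pi(AY)A^2X-\pi(AX)A^2Y$. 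Collecting the resulting monomials, the coefficient of each of $A^2X$, $A^2Y$, $A^3X$, $A^3Y$ is $0$, so the whole expression vanishes.

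Inserting this cancellation into (\ref{eq:N(X,Y)}) leaves precisely $N(X,Y)=\overset{1}{N}(X,Y)$, which is the claim. I do not expect a genuine obstacle here: the only thing requiring care is the bookkeeping of the $A^2$- and $A^3$-terms, and it is worth stressing that the cancellation is a purely algebraic consequence of the particular shape $\overset{1}{T}(U,V)=\pi(V)AU-\pi(U)AV$ of the torsion — it uses no metric-compatibility property of $\overset{1}{\nabla}$ and no hypothesis on $A$ such as $A^2=\pm 1$, so the identity $N=\overset{1}{N}$ holds on any generalized Riemannian manifold carrying the connection (\ref{eq:Q-Snmetric}).
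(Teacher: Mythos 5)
Your proof is correct and follows the same route as the paper: both start from the decomposition (\ref{eq:N(X,Y)}) and reduce the claim to the vanishing of $-\overset{1}{T}(AX,AY) - A^2 \overset{1}{T}(X,Y) + A\overset{1}{T}(AX,Y) + A\overset{1}{T}(X,AY)$, which you verify (correctly) by direct substitution of $\overset{1}{T}(U,V)=\pi(V)AU-\pi(U)AV$. The paper merely asserts this cancellation, so your explicit bookkeeping of the $A^2$- and $A^3$-terms only adds detail, not a different idea.
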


\begin{remark} Equation (\ref{eq:dF1=dF}) can be proved using equations (\ref{eq:exteriorderivativeofF1}) and (\ref{eq:identity3}).
\end{remark}

\section{Properties of curvature tensors}

In this section, we will deal with linearly independent curvature tensors and their properties. The curvature tensor $\overset{1}{R}$ of quarter-symmetric non-metric connection $\overset{1}{\nabla}$, given with (\ref{eq:Q-Snmetric}), can be expressed by the following relation
\begin{equation}\label{eq:R1XYZQ-Snm}
	\begin{split}
		\overset{1}{R} (X,Y)Z = \overset{g}{R} (X,Y)Z  & +  \frac{1}{2}\overset{1}{\alpha}(X,Z)AY - \frac{1}{2}\overset{1}{\alpha}(Y,Z)AX - \frac{1}{2}\overset{1}{\beta}(X,Y)AZ \\
		& -  \frac{1}{2}\overset{1}{\gamma}(X,Z)\pi(Y) + \frac{1}{2}\overset{1}{\gamma}(Y,Z)\pi(X) + \frac{1}{2}\overset{1}{\delta}(X,Y)\pi(Z),
	\end{split}
\end{equation}
where $\overset{1}{\alpha}$, $\overset{1}{\beta}$ are (0,2) type tensor fields and $\overset{1}{\gamma}$, $\overset{1}{\delta}$ are (1,2) type tensor fields given with equation
\begin{align}
	\label{eq:alpha1}
	\overset{1}{\alpha}(X,Y) & = (\overset{g}{\nabla}_{X} \pi )(Y) + \frac{1}{2} \pi(X)\pi(AY) - \frac{1}{2} \pi(AX)\pi(Y) = (\overset{1}{\nabla}_{X} \pi )(Y), \\
	\label{eq:beta1}
	\overset{1}{\beta}(X,Y) & = (\overset{g}{\nabla}_{X} \pi )(Y) - (\overset{g}{\nabla}_{Y} \pi )(X), \\
	\label{eq:gamma1}
	\overset{1}{\gamma}(X,Y) & = (\overset{g}{\nabla}_{X} A )Y -\frac{1}{2} \pi(Y)A^2X, \\
	\label{eq:delta1}
	\overset{1}{\delta}(X,Y) & = (\overset{g}{\nabla}_{X} A )Y - (\overset{g}{\nabla}_{Y} A )X.
\end{align}

According to equation (\ref{eq:Q-S-nmetric0}), the curvature tensor $\overset{0}{R}$ of symmetric connection $\overset{0}{\nabla}$ coincides with Riemannian curvature tensor $\overset{g}{R}$ of the Levi-Civita connection, i.e. $\overset{0}{R}= \overset{g}{R}$. Relations for other linearly independent curvature tensors are presented in the following theorem. 

\begin{theorem}\label{thm:curvaturetensorsofQSnMC2}
	Let $(\mathcal{M}, G=g+F)$ be a generalized Riemannian manifold with the quarter-symmetric non-metric connection (\ref{eq:Q-Snmetric}). Curvature tensors $\overset{\theta}{R} (X,Y)Z$, $\theta = 2, ..., 5$ and Riemannian curvature tensor $\overset{g}{R} (X,Y)Z$ satisfy the following relations
	\begin{align}
		\label{eq:R2XYZQ-Snm1}
		\begin{split}
			\overset{2}{R} (X,Y)Z = & \overset{g}{R} (X,Y)Z  -  \frac{1}{2}\overset{2}{\alpha}(X,Z)AY + \frac{1}{2}\overset{2}{\alpha}(Y,Z)AX + \frac{1}{2}\overset{1}{\beta}(X,Y)AZ \\
			& +  \frac{1}{2}\overset{2}{\gamma}(X,Z)\pi(Y) - \frac{1}{2}\overset{2}{\gamma}(Y,Z)\pi(X) - \frac{1}{2}\overset{1}{\delta}(X,Y)\pi(Z),
		\end{split} \\
		\label{eq:R3XYZQ-Snm1}
		\begin{split}
			\overset{3}{R} (X,Y)Z = & \overset{g}{R} (X,Y)Z + \frac{1}{2}\overset{2}{\alpha}(X,Z)AY + \frac{1}{2}\overset{1}{\alpha}(Y,Z)AX - \frac{1}{2}(\overset{2}{\alpha}(X,Y) + \overset{1}{\alpha}(Y,X))AZ \\
			& -  \frac{1}{2}\overset{1}{\gamma}(X,Z)\pi(Y) - \frac{1}{2}\overset{1}{\gamma}(Y,Z)\pi(X) + \frac{1}{2}(\overset{1}{\delta}(X,Y) + 2\overset{1}{\gamma} (Y,X))\pi(Z),
		\end{split} 
		\\
		\label{eq:R4XYZQ-Snm1}
		\begin{split}
			\overset{4}{R} (X,Y)Z = & \overset{g}{R} (X,Y)Z  + \frac{1}{2}\overset{2}{\alpha}(X,Z)AY + \frac{1}{2}\overset{1}{\alpha}(Y,Z)AX - \frac{1}{2}(\overset{1}{\alpha}(X,Y) + \overset{2}{\alpha}(Y,X))AZ \\
			& -  \frac{1}{2}\overset{1}{\gamma}(X,Z)\pi(Y) - \frac{1}{2}\overset{1}{\gamma}(Y,Z)\pi(X) + \frac{1}{2}(\overset{1}{\delta}(X,Y) + 2\overset{1}{\gamma} (Y,X))\pi(Z) \\
			& - \pi (Z) (\pi (Y) A^2 X - \pi (X) A^2 Y  ),
		\end{split} 		
	\end{align}
	\begin{align}
		\label{eq:R5XYZQ-Snm1}
		\begin{split}
			\overset{5}{R} (X,Y)Z = & \overset{g}{R} (X,Y)Z  + \frac{1}{4} \pi(X) (\pi(Y) A^2 Z - \pi(AZ) AY) + \frac{1}{4} \pi(Y) (\pi (X) A^2 Z - \pi(AZ) AX) \\
			& - \frac{1}{4} \pi(Z) (\pi(X) A^2Y + \pi(Y) A^2X - \pi(AX)AY- \pi(AY)AX), 
		\end{split}
	\end{align}
	where $\overset{1}{\alpha}$, $\overset{1}{\beta}$, $\overset{1}{\gamma}$, $\overset{1}{\delta}$ given with (\ref{eq:alpha1}), (\ref{eq:beta1}), (\ref{eq:gamma1}), (\ref{eq:delta1}), respectively, and $\overset{2}{\alpha}$, $\overset{2}{\gamma}$ given with
	\begin{align}
		\label{eq:alpha2}
		\overset{2}{\alpha}(X,Y) & = (\overset{g}{\nabla}_{X} \pi )(Y) - \frac{1}{2} \pi(X)\pi(AY) + \frac{1}{2} \pi(AX)\pi(Y), \\
		\label{eq:gamma2}
		\overset{2}{\gamma}(X,Y) & = (\overset{g}{\nabla}_{X} A )Y +\frac{1}{2} \pi(Y)A^2X.
	\end{align}
\end{theorem}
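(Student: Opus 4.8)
The plan is to reduce every curvature tensor to the Riemannian one plus explicit algebraic correction terms. Write both connections as the Levi-Civita connection plus (minus) a single difference tensor:
\[
H(X,Y) := \overset{1}{\nabla}_X Y - \overset{g}{\nabla}_X Y = \frac{1}{2}\pi(Y)AX - \frac{1}{2}\pi(X)AY ,
\]
so that $\overset{1}{\nabla} = \overset{g}{\nabla} + H$ and, by (\ref{eq:Q-S-nmetric2}), $\overset{2}{\nabla} = \overset{g}{\nabla} - H$. Two elementary identities carry most of the load: the symmetric part of $H$ vanishes, $H(X,Y) + H(Y,X) = 0$, and its skew-symmetric part is the torsion, $H(X,Y) - H(Y,X) = \overset{1}{T}(X,Y)$ (see (\ref{eq:ttofQ-Snmetric})). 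I will also use the Leibniz identity
\[
(\overset{g}{\nabla}_X H)(Y,Z) = \frac{1}{2}(\overset{g}{\nabla}_X\pi)(Z)\,AY + \frac{1}{2}\pi(Z)\,(\overset{g}{\nabla}_X A)Y - \frac{1}{2}(\overset{g}{\nabla}_X\pi)(Y)\,AZ - \frac{1}{2}\pi(Y)\,(\overset{g}{\nabla}_X A)Z ,
\]
together with $\pi(H(Y,Z)) = \frac{1}{2}\pi(Z)\pi(AY) - \frac{1}{2}\pi(Y)\pi(AZ)$ and $A(H(Y,Z)) = \frac{1}{2}\pi(Z)A^2Y - \frac{1}{2}\pi(Y)A^2Z$ for the quadratic terms.

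First I would record the general fact that, for a torsion-free connection $\overset{g}{\nabla}$ and any $(1,2)$-tensor $H$, the connection $\overset{g}{\nabla}+H$ has curvature $\overset{g}{R}(X,Y)Z + (\overset{g}{\nabla}_X H)(Y,Z) - (\overset{g}{\nabla}_Y H)(X,Z) + H(X,H(Y,Z)) - H(Y,H(X,Z))$; expanding the last two terms and collecting the coefficients of $AX,AY,AZ$ and $\pi(X),\pi(Y),\pi(Z)$ recovers (\ref{eq:R1XYZQ-Snm}), the key being that a linear term $(\overset{g}{\nabla}_X\pi)(Z)$ absorbs the quadratic remainder $\frac{1}{2}\pi(X)\pi(AZ) - \frac{1}{2}\pi(AX)\pi(Z)$ to become $\overset{1}{\alpha}(X,Z)$, and $(\overset{g}{\nabla}_X A)Z$ absorbs $-\frac{1}{2}\pi(Z)A^2X$ to become $\overset{1}{\gamma}(X,Z)$. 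The formula (\ref{eq:R2XYZQ-Snm1}) for $\overset{2}{R}$ is then immediate by replacing $H$ with $-H$: the derivative part changes sign while the quadratic part does not, which turns $\overset{1}{\alpha}$ into $-\overset{2}{\alpha}$ and $\overset{1}{\gamma}$ into $-\overset{2}{\gamma}$ and flips $\overset{1}{\beta},\overset{1}{\delta}$, in agreement with (\ref{eq:alpha2}) and (\ref{eq:gamma2}).

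Next I would treat $\overset{3}{R},\overset{4}{R}$ together. Substituting $\overset{1}{\nabla}=\overset{g}{\nabla}+H$, $\overset{2}{\nabla}=\overset{g}{\nabla}-H$ into their definitions, the purely $\overset{g}{\nabla}$-terms assemble into $\overset{g}{R}(X,Y)Z$ (using torsion-freeness); the terms $\overset{g}{\nabla}_{H(X,Y)}Z + \overset{g}{\nabla}_{H(Y,X)}Z$ cancel because $H(X,Y)+H(Y,X)=0$; the $H(\overset{g}{\nabla}\cdot\,,\cdot)$-terms cancel; the remaining derivative terms combine (with a plus sign, not the usual antisymmetric minus) into $(\overset{g}{\nabla}_X H)(Y,Z) + (\overset{g}{\nabla}_Y H)(X,Z)$; the $H(\cdot,H(\cdot,\cdot))$-terms give $-H(X,H(Y,Z)) + H(Y,H(X,Z))$; and the nested $H(H(\cdot,\cdot),\cdot)$-terms give $+H(\overset{1}{T}(X,Y),Z)$ for $\overset{3}{R}$ and $-H(\overset{1}{T}(X,Y),Z)$ for $\overset{4}{R}$, using $H(X,Y)-H(Y,X)=\overset{1}{T}(X,Y)$. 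Expanding as before --- here the $AY$-coefficient picks up its quadratic remainder from $H(Y,H(X,Z))$ and becomes $\frac{1}{2}\overset{2}{\alpha}(X,Z)$, while the $AX$-coefficient becomes $\frac{1}{2}\overset{1}{\alpha}(Y,Z)$, which accounts for the asymmetric combination $\frac{1}{2}\overset{2}{\alpha}(X,Z)AY + \frac{1}{2}\overset{1}{\alpha}(Y,Z)AX$ in (\ref{eq:R3XYZQ-Snm1})--(\ref{eq:R4XYZQ-Snm1}) --- and using $A\overset{1}{T}(X,Y) = \pi(Y)A^2X - \pi(X)A^2Y$ and $\pi(\overset{1}{T}(X,Y)) = \pi(Y)\pi(AX) - \pi(X)\pi(AY)$, the nested term $\mp H(\overset{1}{T}(X,Y),Z)$ supplies exactly the extra summand $-\pi(Z)(\pi(Y)A^2X - \pi(X)A^2Y)$ (and the accompanying sign changes in the $AZ$- and $\pi(Z)$-coefficients) that distinguishes (\ref{eq:R4XYZQ-Snm1}) from (\ref{eq:R3XYZQ-Snm1}). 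Finally, for $\overset{5}{R}$ the same substitution makes all derivative terms and all first-order terms cancel, leaving $\overset{5}{R}(X,Y)Z = \overset{g}{R}(X,Y)Z + H(X,H(Y,Z)) + H(Y,H(X,Z))$, which after expanding the two quadratic terms is exactly (\ref{eq:R5XYZQ-Snm1}).

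The routine but delicate part is the sign bookkeeping in $\overset{3}{R}$ and $\overset{4}{R}$: the four mixed second-order terms and the two nested terms $\overset{2}{\nabla}_{\overset{1}{\nabla}_Y X}Z$, $\overset{1}{\nabla}_{\overset{2}{\nabla}_X Y}Z$ (respectively $\overset{2}{\nabla}_{\overset{2}{\nabla}_Y X}Z$, $\overset{1}{\nabla}_{\overset{1}{\nabla}_X Y}Z$) each break into several summands, and one must carry the signs accurately to observe the cancellations above; there is no deeper obstruction, since once $\overset{\theta}{R}$ is reduced to $\overset{g}{R}$ plus an explicit expression in $\pi$, $A$, $A^2$, $\overset{g}{\nabla}\pi$ and $\overset{g}{\nabla}A$, matching with (\ref{eq:R2XYZQ-Snm1})--(\ref{eq:R5XYZQ-Snm1}) is a direct comparison of coefficients via (\ref{eq:alpha1}), (\ref{eq:beta1}), (\ref{eq:gamma1}), (\ref{eq:delta1}), (\ref{eq:alpha2}), (\ref{eq:gamma2}).
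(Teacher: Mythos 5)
Your proposal is correct: I checked the expansion of each $\overset{\theta}{R}$, $\theta=2,\dots,5$, against the stated formulas, and the bookkeeping you describe (cancellation of the $\overset{g}{\nabla}_{H(X,Y)}Z+\overset{g}{\nabla}_{H(Y,X)}Z$ terms via $H(X,Y)+H(Y,X)=0$, the sign $+(\overset{g}{\nabla}_X H)(Y,Z)+(\overset{g}{\nabla}_Y H)(X,Z)$ for the mixed tensors, the nested contribution $\pm H(\overset{1}{T}(X,Y),Z)$ distinguishing $\overset{3}{R}$ from $\overset{4}{R}$, and the reduction of $\overset{5}{R}$ to $\overset{g}{R}+H(X,H(Y,Z))+H(Y,H(X,Z))$) all comes out right and reproduces (\ref{eq:R2XYZQ-Snm1})--(\ref{eq:R5XYZQ-Snm1}) exactly, including the asymmetric appearance of $\overset{1}{\alpha}$ versus $\overset{2}{\alpha}$. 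Your route is, however, genuinely different from the paper's. The paper proves only the $\overset{3}{R}$ relation explicitly, and does so by invoking a pre-established structural identity from the literature, $\overset{3}{R}(X,Y)Z=\overset{1}{R}(X,Y)Z+(\overset{1}{\nabla}_Y\overset{1}{T})(X,Z)$, then computing the covariant derivative of the torsion with respect to $\overset{1}{\nabla}$ and substituting the already-derived formula (\ref{eq:R1XYZQ-Snm}) for $\overset{1}{R}$; the remaining cases are left implicit. You instead work from first principles with the single difference tensor $H$ and the relations $\overset{1}{\nabla}=\overset{g}{\nabla}+H$, $\overset{2}{\nabla}=\overset{g}{\nabla}-H$, expanding the defining expressions of all five curvature tensors directly. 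What your approach buys is uniformity and self-containment --- all four formulas fall out of one expansion scheme without citing external identities, and the structural reasons for the differences between the tensors (e.g.\ why $\overset{4}{R}-\overset{3}{R}=-2H(\overset{1}{T}(X,Y),Z)$, and why $\overset{5}{R}$ retains only the quadratic terms) become transparent. What the paper's approach buys is brevity: once the identity relating $\overset{3}{R}$ to $\overset{1}{R}$ is granted, only one covariant derivative of $\overset{1}{T}$ needs to be computed. Both are sound; yours is the more complete proof of the theorem as stated.
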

\begin{proof}
	We will prove the relation for curvature tensor $\overset{3}{R}$. Relation between curvature tensors $\overset{3}{R}$ and $\overset{1}{R}$ is given with formula  (see \cite{zlatanovic2021})
	\begin{equation*}
		\overset{3}{R} (X,Y)Z = \overset{1}{R} (X,Y)Z + (\overset{1}{\nabla}_{Y} \overset{1}{T})(X,Z).
	\end{equation*}
	For the covariant derivative of torsion tensor $\overset{1}{T}$ the following equation holds    
	\begin{equation*}
		\begin{split}
			(\overset{1}{\nabla}_{X} 	\overset{1}{T})(Y,Z) & =  AY (  \overset{1}{\nabla}_{X} \pi  )(Z) + \pi(Z) (  \overset{1}{\nabla}_{X} A)Y - AZ (\overset{1}{\nabla}_{X} \pi  )(Y) - \pi(Y) (  \overset{1}{\nabla}_{X}A)Z \\
			& = \overset{1}{\alpha}(X,Z)AY -\overset{1}{\alpha}(X,Y)AZ + \pi(Z)\overset{1}{\gamma}(X,Y) - \pi(Y)\overset{1}{\gamma}(X,Z) \\
			& \quad + \frac{1}{2} (\pi(AY)\pi(Z)-\pi(Y)\pi(AZ))AX,
		\end{split}
	\end{equation*}
	where $\overset{1}{\alpha}$ and $\overset{1}{\gamma}$ are given with equations (\ref{eq:alpha1}) and (\ref{eq:gamma1}). By combining the previous two equations and (\ref{eq:R1XYZQ-Snm}), we get (\ref{eq:R3XYZQ-Snm1}).
\end{proof}

Below we will study the transformations of Levi-Civita connection $\overset{g}{\nabla}$ on connections $\overset{1}{\nabla}$ and $\overset{2}{\nabla}$. We will determine the conditions for the Riemannian curvature tensor to be invariant under these transformations.

If we define (1,3) tensor $\overset{1}{M} (X,Y)Z$ as follows
\begin{equation}\label{eq:M1XYZQ-Snm}
	\begin{split}
		\overset{1}{M} (X,Y)Z =  \overset{1}{\alpha}(X,Z)AY  -  \overset{1}{\gamma}(X,Z)\pi(Y)  - (\overset{g}{\nabla}_{X} \pi )(Y) AZ + \pi(Z) (\overset{g}{\nabla}_{X} A )Y,
	\end{split}
\end{equation}
where $\overset{1}{\alpha}$ and $\overset{1}{\gamma}$ are given with equations (\ref{eq:alpha1}) and (\ref{eq:gamma1}), then the curvature tensor $\overset{1}{R}$ takes the form
\begin{equation*}
	\overset{1}{R} (X,Y)Z = \overset{g}{R} (X,Y)Z + \frac{1}{2}\overset{1}{M} (X,Y)Z - \frac{1}{2}\overset{1}{M} (Y,X)Z,
\end{equation*}
from which we see that curvature tensor $\overset{1}{R}$ is equal to Riemannian curvature tensor $\overset{g}{R}$ if and only if tensor $\overset{1}{M}$ is symmetric with respect to vectors $X$ and $Y$.
\begin{theorem}
	The Riemannian curvature tensor $\overset{g}{R} (X,Y)Z$ is invariant under connection transformation $\overset{g}{\nabla} \rightarrow \overset{1}{\nabla}$ if and only if tensor $\overset{1}{M} (X,Y)Z$ is symmetric with respect to $X$ and $Y$, where $\overset{1}{M}$ is given with (\ref{eq:M1XYZQ-Snm}), $\overset{g}{\nabla}$ denotes Levi-Civita connection and $\overset{1}{\nabla}$ denotes the quarter-symmetric non-metric connection (\ref{eq:Q-Snmetric}).
\end{theorem}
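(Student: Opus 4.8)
The plan is to unwind the meaning of ``invariant under the connection transformation $\overset{g}{\nabla}\to\overset{1}{\nabla}$'' and to compare it against the expression (\ref{eq:R1XYZQ-Snm}) for $\overset{1}{R}$ already derived above. By definition, the Riemannian curvature tensor is invariant under this transformation precisely when $\overset{1}{R}(X,Y)Z=\overset{g}{R}(X,Y)Z$ for all vector fields $X,Y,Z$, so the whole statement reduces to deciding when the six correction terms appearing in (\ref{eq:R1XYZQ-Snm}) sum to zero.

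First I would show that those six correction terms are exactly $\tfrac12\overset{1}{M}(X,Y)Z-\tfrac12\overset{1}{M}(Y,X)Z$. Substituting the definition (\ref{eq:M1XYZQ-Snm}) of $\overset{1}{M}$ and forming the difference, the summands $\overset{1}{\alpha}(X,Z)AY$ and $-\overset{1}{\gamma}(X,Z)\pi(Y)$ antisymmetrize term-by-term into the first, second, fourth and fifth pieces of (\ref{eq:R1XYZQ-Snm}), while the mixed summands $-(\overset{g}{\nabla}_{X}\pi)(Y)AZ$ and $\pi(Z)(\overset{g}{\nabla}_{X}A)Y$ antisymmetrize, after invoking the definitions (\ref{eq:beta1}) of $\overset{1}{\beta}$ and (\ref{eq:delta1}) of $\overset{1}{\delta}$, into $-\overset{1}{\beta}(X,Y)AZ$ and $+\overset{1}{\delta}(X,Y)\pi(Z)$ respectively. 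This produces the compact identity $\overset{1}{R}(X,Y)Z=\overset{g}{R}(X,Y)Z+\tfrac12\overset{1}{M}(X,Y)Z-\tfrac12\overset{1}{M}(Y,X)Z$, which is precisely the reformulation stated just before the theorem.

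Given this identity, the conclusion is immediate: $\overset{1}{R}=\overset{g}{R}$ holds identically if and only if $\overset{1}{M}(X,Y)Z-\overset{1}{M}(Y,X)Z=0$ for all $X,Y,Z$, that is, if and only if $\overset{1}{M}$ is symmetric in its first two arguments. For the nontrivial direction one simply observes that the map $(X,Y,Z)\mapsto\overset{1}{M}(X,Y)Z-\overset{1}{M}(Y,X)Z$ is the full obstruction to $\overset{1}{R}-\overset{g}{R}$ vanishing, and it is zero exactly under the stated symmetry; conversely, if $\overset{1}{M}$ is symmetric in $X,Y$ the correction terms cancel pointwise.

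The only genuine work — and the only place a sign slip could enter — is the term-by-term bookkeeping identifying the antisymmetrized $\overset{1}{M}$ with the right-hand side of (\ref{eq:R1XYZQ-Snm}); this is routine linear algebra in the pointwise fiber, the one subtlety being to keep track of which pieces are packaged into $\overset{1}{\alpha}$ and $\overset{1}{\gamma}$ (carrying the extra quadratic-in-$\pi$ and $A^{2}$ terms, cf.\ (\ref{eq:alpha1}), (\ref{eq:gamma1})) versus which enter as bare Levi-Civita derivatives $\overset{g}{\nabla}\pi$, $\overset{g}{\nabla}A$. No new curvature computations or covariant differentiations are required beyond what is already recorded in (\ref{eq:R1XYZQ-Snm})–(\ref{eq:delta1}) and (\ref{eq:M1XYZQ-Snm}).
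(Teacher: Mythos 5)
Your proposal is correct and follows exactly the route the paper takes: it interprets invariance as $\overset{1}{R}=\overset{g}{R}$, verifies that the six correction terms in (\ref{eq:R1XYZQ-Snm}) are precisely $\tfrac12\overset{1}{M}(X,Y)Z-\tfrac12\overset{1}{M}(Y,X)Z$ via the definitions of $\overset{1}{\beta}$ and $\overset{1}{\delta}$, and concludes that the obstruction vanishes iff $\overset{1}{M}$ is symmetric in $X$ and $Y$. The term-by-term identification you describe checks out, so nothing is missing.
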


Similarly, curvature tensor $\overset{2}{R}$ can be written in the form
\begin{equation*}
	\overset{2}{R} (X,Y)Z = \overset{g}{R} (X,Y)Z - \frac{1}{2}\overset{2}{M} (X,Y)Z + \frac{1}{2}\overset{2}{M} (Y,X)Z,
\end{equation*}
where
\begin{equation}\label{eq:M2XYZQ-Snm}
	\begin{split}
		\overset{2}{M} (X,Y)Z =  \overset{2}{\alpha}(X,Z)AY  -  \overset{2}{\gamma}(X,Z)\pi(Y)  - (\overset{g}{\nabla}_{X} \pi )(Y) AZ + \pi(Z) (\overset{g}{\nabla}_{X} A )Y,
	\end{split}
\end{equation}
and $\overset{2}{\alpha}$ and $\overset{2}{\gamma}$ are given with equations (\ref{eq:alpha2}) and (\ref{eq:gamma2}).
\begin{theorem}
	The Riemannian curvature tensor $\overset{g}{R} (X,Y)Z$ is invariant under connection transformation $\overset{g}{\nabla} \rightarrow \overset{2}{\nabla}$ if and only if tensor $\overset{2}{M} (X,Y)Z$ is symmetric with respect to $X$ and $Y$, where $\overset{2}{M}$ is given with (\ref{eq:M2XYZQ-Snm}), $\overset{g}{\nabla}$ denotes Levi-Civita connection and $\overset{2}{\nabla}$ denotes the quarter-symmetric non-metric connection (\ref{eq:Q-S-nmetric2}).
\end{theorem}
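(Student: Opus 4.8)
The plan is to proceed exactly as the paragraph preceding Theorem~\ref{thm:curvaturetensorsofQSnMC2} treats the connection $\overset{1}{\nabla}$, only now for its dual $\overset{2}{\nabla}$: the goal is to rewrite the already-computed expression for $\overset{2}{R}$ so that the entire correction to $\overset{g}{R}$ appears as the antisymmetrization in $X,Y$ of the single $(1,3)$ tensor $\overset{2}{M}$ defined by~(\ref{eq:M2XYZQ-Snm}).

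First I would take formula~(\ref{eq:R2XYZQ-Snm1}) of Theorem~\ref{thm:curvaturetensorsofQSnMC2} as the starting point. (That formula itself comes from substituting~(\ref{eq:Q-S-nmetric2}) into the definition of $\overset{2}{R}$ and expanding via the standard identity that a connection differing from $\overset{g}{\nabla}$ by a $(1,2)$ deformation tensor $B$ has curvature $\overset{g}{R}(X,Y)Z+(\overset{g}{\nabla}_{X}B)(Y,Z)-(\overset{g}{\nabla}_{Y}B)(X,Z)+B(X,B(Y,Z))-B(Y,B(X,Z))$, here with $B(X,Y)=\frac{1}{2}\bigl(\pi(X)AY-\pi(Y)AX\bigr)$; if preferred, this can be redone from scratch in a few lines.) In~(\ref{eq:R2XYZQ-Snm1}) the terms $\frac{1}{2}\overset{1}{\beta}(X,Y)AZ$ and $-\frac{1}{2}\overset{1}{\delta}(X,Y)\pi(Z)$ are already antisymmetric in $X,Y$; I would expand them by the definitions~(\ref{eq:beta1}) and~(\ref{eq:delta1}), so that every summand of the correction splits into an ``$X$-first'' piece and a ``$Y$-first'' piece.

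Second, I would collect the ``$X$-first'' pieces: grouping $-\frac{1}{2}\overset{2}{\alpha}(X,Z)AY$, $+\frac{1}{2}\overset{2}{\gamma}(X,Z)\pi(Y)$, $+\frac{1}{2}(\overset{g}{\nabla}_{X}\pi)(Y)AZ$ and $-\frac{1}{2}\pi(Z)(\overset{g}{\nabla}_{X}A)Y$ yields exactly $-\frac{1}{2}\overset{2}{M}(X,Y)Z$ with $\overset{2}{M}$ as in~(\ref{eq:M2XYZQ-Snm}); the remaining ``$Y$-first'' pieces assemble into $+\frac{1}{2}\overset{2}{M}(Y,X)Z$. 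Hence
\[
\overset{2}{R}(X,Y)Z=\overset{g}{R}(X,Y)Z-\frac{1}{2}\overset{2}{M}(X,Y)Z+\frac{1}{2}\overset{2}{M}(Y,X)Z,
\]
which is the identity displayed just before the theorem.

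Finally, since $\overset{2}{R}(X,Y)Z-\overset{g}{R}(X,Y)Z=-\frac{1}{2}\bigl(\overset{2}{M}(X,Y)Z-\overset{2}{M}(Y,X)Z\bigr)$, this difference vanishes identically precisely when $\overset{2}{M}(X,Y)Z=\overset{2}{M}(Y,X)Z$, i.e. when $\overset{2}{M}$ is symmetric in its first two arguments; this gives the equivalence. The only real obstacle is bookkeeping: one must be careful that the signs and the factors $\frac{1}{2}$ line up when the $\beta$- and $\delta$-terms are split, and that $\overset{2}{\alpha},\overset{2}{\gamma}$ (not $\overset{1}{\alpha},\overset{1}{\gamma}$) appear, reflecting the sign flip of~(\ref{eq:Q-S-nmetric2}) against~(\ref{eq:Q-Snmetric}). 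There is no conceptual difficulty, and the argument is the mirror image of the one given for $\overset{1}{M}$ and $\overset{1}{\nabla}$.
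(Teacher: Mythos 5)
Your proposal is correct and follows essentially the same route as the paper: it derives the identity $\overset{2}{R}(X,Y)Z=\overset{g}{R}(X,Y)Z-\frac{1}{2}\overset{2}{M}(X,Y)Z+\frac{1}{2}\overset{2}{M}(Y,X)Z$ displayed just before the theorem (your regrouping of the terms of (\ref{eq:R2XYZQ-Snm1}) into ``$X$-first'' and ``$Y$-first'' pieces checks out exactly against the definition (\ref{eq:M2XYZQ-Snm})) and reads off the equivalence from the antisymmetrized form. This is precisely the argument the paper uses, mirroring the treatment of $\overset{1}{M}$ and $\overset{1}{\nabla}$.
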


The following theorem presents the skew-symmetric properties of curvature tensors.
\begin{theorem}
	Let $(\mathcal{M}, G=g+F)$ be a generalized Riemannian manifold with the quarter-symmetric non-metric connection (\ref{eq:Q-Snmetric}). Curvature tensors $\overset{\theta}{R} (X,Y)Z$, $\theta = 1, \dots, 5$ satisfy the following identities
	\begin{align*}
		\overset{\alpha}{R} (X,Y)Z  = & - \overset{\alpha}{R} (Y,X)Z, \; \alpha=1,2,
		\\
		\overset{\beta}{R} (X,Y)Z  = & - \overset{\beta}{R} (Y,X)Z + (\overset{g}{\nabla}_{Y} \pi)(Z)AX + (\overset{g}{\nabla}_{X} \pi)(Z)AY - ((\overset{g}{\nabla}_{X} \pi)(Y) + (\overset{g}{\nabla}_{Y} \pi)(X))AZ \\
		& - \pi(X) (\overset{g}{\nabla}_{Y} A)Z - \pi(Y) (\overset{g}{\nabla}_{X} A)Z + \pi(Z)( (\overset{g}{\nabla}_{X} A)Y + (\overset{g}{\nabla}_{Y} A)X), \; \beta=3,4,
		\\
		\overset{5}{R} (X,Y)Z  = & - \overset{5}{R} (Y,X)Z + \frac{1}{2} \pi(X) (\pi(Y) A^2 Z - \pi(AZ) AY) + \frac{1}{2} \pi(Y) (\pi (X) A^2 Z - \pi(AZ) AX) \\
		& - \frac{1}{2} \pi(Z) (\pi(X) A^2Y + \pi(Y) A^2X - \pi(AX)AY- \pi(AY)AX).
	\end{align*}
\end{theorem}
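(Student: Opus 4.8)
\textit{Proof proposal.}

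The plan is to read everything off the explicit expressions for the curvature tensors that have already been established: (\ref{eq:R1XYZQ-Snm}) for $\overset{1}{R}$ and (\ref{eq:R2XYZQ-Snm1})--(\ref{eq:R5XYZQ-Snm1}) for $\overset{2}{R},\dots,\overset{5}{R}$, together with the classical antisymmetry $\overset{g}{R}(X,Y)Z=-\overset{g}{R}(Y,X)Z$ of the Riemannian curvature tensor in its first two arguments. The whole argument then reduces to tracking how the correction terms transform when $X$ and $Y$ are interchanged, which is purely algebraic.

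For $\theta=1$ (and identically for $\theta=2$) I would argue as follows. In (\ref{eq:R1XYZQ-Snm}) the tensors $\overset{1}{\beta}$ and $\overset{1}{\delta}$ from (\ref{eq:beta1}) and (\ref{eq:delta1}) are manifestly skew-symmetric in $X,Y$, so the terms $\overset{1}{\beta}(X,Y)AZ$ and $\overset{1}{\delta}(X,Y)\pi(Z)$ change sign under the swap; the pairs $\overset{1}{\alpha}(X,Z)AY-\overset{1}{\alpha}(Y,Z)AX$ and $-\overset{1}{\gamma}(X,Z)\pi(Y)+\overset{1}{\gamma}(Y,Z)\pi(X)$ change sign by inspection; and $\overset{g}{R}$ changes sign. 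Hence $\overset{1}{R}(Y,X)Z=-\overset{1}{R}(X,Y)Z$, and the same computation applied to (\ref{eq:R2XYZQ-Snm1}) gives the identity for $\overset{2}{R}$.

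For $\theta=3,4$ I would instead compute the symmetric combination $\overset{\beta}{R}(X,Y)Z+\overset{\beta}{R}(Y,X)Z$, in which the $\overset{g}{R}$-contributions cancel. Collecting the $AX$-, $AY$- and $AZ$-terms, the key identities are $\overset{1}{\alpha}(U,V)+\overset{2}{\alpha}(U,V)=2(\overset{g}{\nabla}_{U}\pi)(V)$ and $\overset{1}{\gamma}(U,V)+\overset{2}{\gamma}(U,V)=2(\overset{g}{\nabla}_{U}A)V$, which follow at once from (\ref{eq:alpha1}), (\ref{eq:alpha2}), (\ref{eq:gamma1}), (\ref{eq:gamma2}); combined with the skew-symmetry of $\overset{1}{\delta}$ these turn the $A$-valued part into exactly the first line of the claimed formula for $\overset{\beta}{R}$. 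For the $\pi(X)$-, $\pi(Y)$- and $\pi(Z)$-terms one substitutes $\overset{1}{\gamma}(U,V)=(\overset{g}{\nabla}_{U}A)V-\frac{1}{2}\pi(V)A^{2}U$ from (\ref{eq:gamma1}); the cubic $\frac{1}{2}\pi(V)A^{2}U$ pieces arising from the $\pi(Y)$- and $\pi(X)$-slots cancel against those arising from the $\pi(Z)$-slot, leaving precisely the second line of the claimed formula. For $\theta=4$ the only extra term relative to $\theta=3$ is $-\pi(Z)(\pi(Y)A^{2}X-\pi(X)A^{2}Y)$, which is skew in $X,Y$ and thus drops out of the symmetric combination; hence $\theta=3$ and $\theta=4$ yield the same identity, as stated. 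Finally, for $\theta=5$ I would note that in (\ref{eq:R5XYZQ-Snm1}) the last $\pi(Z)$-term is already symmetric in $X,Y$ and the first two terms are merely interchanged by the swap, so the entire non-$\overset{g}{R}$ part, call it $S(X,Y)Z$, is symmetric; then $\overset{5}{R}(X,Y)Z+\overset{5}{R}(Y,X)Z=2S(X,Y)Z$, which is exactly the displayed identity. The main obstacle is purely organizational --- keeping the several cubic $\pi\,\pi\,A^{2}$ terms straight in the $\theta=3,4$ case and verifying their cancellation --- and there is no conceptual difficulty once the summation identities for $\overset{1}{\alpha}+\overset{2}{\alpha}$ and $\overset{1}{\gamma}+\overset{2}{\gamma}$ are at hand.
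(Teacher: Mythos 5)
Your proposal is correct and follows essentially the same route as the paper, whose proof only writes out the $\theta=5$ case (adding the formula for $\overset{5}{R}(X,Y)Z$ to the one with $X$ and $Y$ interchanged and invoking $\overset{g}{R}(X,Y)Z=-\overset{g}{R}(Y,X)Z$); your handling of the remaining cases is the same direct bookkeeping, and the cancellation of the cubic $\pi\,\pi\,A^{2}$ terms for $\beta=3,4$ checks out. One minor imprecision: $\overset{4}{R}$ differs from $\overset{3}{R}$ not only in the final $\pi(Z)$-term but also in the $AZ$-coefficient ($\overset{1}{\alpha}(X,Y)+\overset{2}{\alpha}(Y,X)$ versus $\overset{2}{\alpha}(X,Y)+\overset{1}{\alpha}(Y,X)$), yet that difference is likewise skew in $X,Y$ and so also drops out of the symmetric combination, leaving your conclusion intact.
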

\begin{proof} We will prove the relation for curvature tensor $\overset{5}{R}$. If we add up equations (\ref{eq:R5XYZQ-Snm1}) and
	\begin{equation}
		\label{eq:R5YXZQ-Snm1}
		\begin{split}
			\overset{5}{R} (Y,X)Z = & \overset{g}{R} (Y,X)Z  + \frac{1}{4} \pi(Y) (\pi(X) A^2 Z - \pi(AZ) AX) + \frac{1}{4} \pi(X) (\pi (Y) A^2 Z - \pi(AZ) AY) \\
			& - \frac{1}{4} \pi(Z) (\pi(Y) A^2X + \pi(X) A^2Y - \pi(AY)AX- \pi(AX)AY), 
		\end{split}
	\end{equation}
	we get the equation
	\begin{equation*}
		\begin{split}
			\overset{5}{R} (X,Y)Z + \overset{5}{R} (Y,X)Z  = &  \frac{1}{2} \pi(X) (\pi(Y) A^2 Z - \pi(AZ) AY) + \frac{1}{2} \pi(Y) (\pi (X) A^2 Z - \pi(AZ) AX) \\
			& - \frac{1}{2} \pi(Z) (\pi(X) A^2Y + \pi(Y) A^2X - \pi(AX)AY- \pi(AY)AX),
	\end{split}\end{equation*}
	where we used the skew-symmetry of tensor $\overset{g}{R}$, i.e. $\overset{g}{R} (X,Y)Z = -\overset{g}{R} (Y,X)Z$.
\end{proof}

Based on relations (\ref{eq:R5XYZQ-Snm1}) and (\ref{eq:R5YXZQ-Snm1}), we conclude that curvature tensor $\overset{5}{R}$ has the following property
\begin{equation*}
	\overset{5}{R} (X,Y)Z - \overset{5}{R} (Y,X)Z = 2\overset{g}{R} (X,Y)Z.
\end{equation*}

If we cyclically sum equations (\ref{eq:R1XYZQ-Snm}), (\ref{eq:R2XYZQ-Snm1}), (\ref{eq:R3XYZQ-Snm1}), (\ref{eq:R4XYZQ-Snm1}), (\ref{eq:R5XYZQ-Snm1}), we will obtain the first Bianchi identities for curvature tensors $\overset{\theta}{R} (X,Y)Z$, $\theta = 1, \dots, 5$.

\begin{theorem}
	Let $(\mathcal{M}, G=g+F)$ be a generalized Riemannian manifold with the quarter-symmetric non-metric connection (\ref{eq:Q-Snmetric}). Curvature tensors $\overset{\theta}{R} (X,Y)Z$, $\theta = 1, \dots, 5$ satisfy the following identities
	\begin{align*}
		\underset{XYZ}{\sigma}	\overset{1}{R} (X,Y)Z  = & \underset{XYZ}{\sigma} \big( \pi(X) ((\overset{g}{\nabla}_{Y} A)Z - (\overset{g}{\nabla}_{Z} A)Y) - ((\overset{g}{\nabla}_{X} \pi )(Y) - (\overset{g}{\nabla}_{Y} \pi )(X)) AZ  \\ 
		& \qquad -\frac{1}{2} (\pi(X)\pi(AY) - \pi(AX)\pi(Y))AZ \big),
		\\
		\underset{XYZ}{\sigma}	\overset{2}{R} (X,Y)Z  = & \underset{XYZ}{\sigma} \big(  ((\overset{g}{\nabla}_{X} \pi )(Y) - (\overset{g}{\nabla}_{Y} \pi )(X)) AZ - \pi(X) ((\overset{g}{\nabla}_{Y} A)Z - (\overset{g}{\nabla}_{Z} A)Y)  \\ 
		& \qquad -\frac{1}{2} (\pi(X)\pi(AY) - \pi(AX)\pi(Y))AZ \big),
		\\
		\underset{XYZ}{\sigma}	\overset{3}{R} (X,Y)Z  = & \underset{XYZ}{\sigma} (\pi(X)\pi(AY) - \pi(AX)\pi(Y))AZ,
		\\
		\underset{XYZ}{\sigma} \overset{\theta}{R} (X,Y)Z  = & 0, \; \theta=4,5.
	\end{align*}
\end{theorem}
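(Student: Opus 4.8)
The plan is to start from the explicit expressions (\ref{eq:R1XYZQ-Snm}) and (\ref{eq:R2XYZQ-Snm1})--(\ref{eq:R5XYZQ-Snm1}), which write each $\overset{\theta}{R}(X,Y)Z$ as $\overset{g}{R}(X,Y)Z$ plus lower-order correction terms built from $\overset{1}{\alpha},\overset{2}{\alpha},\overset{1}{\beta},\overset{1}{\gamma},\overset{2}{\gamma},\overset{1}{\delta}$, to apply the cyclic sum $\underset{XYZ}{\sigma}$, and to invoke the first Bianchi identity $\underset{XYZ}{\sigma}\overset{g}{R}(X,Y)Z=0$ for the Levi-Civita connection. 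Thus only the correction terms survive, and the whole argument reduces to algebraic simplification of those cyclic sums using the definitions (\ref{eq:alpha1})--(\ref{eq:delta1}), (\ref{eq:alpha2}), (\ref{eq:gamma2}), the relation $F(X,Y)=g(AX,Y)$ and the skew-symmetry of $F$ (equivalently $g(AX,AY)=-g(A^2X,Y)$), and the fact that scalar factors commute.

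For $\overset{1}{R}$ and $\overset{2}{R}$ I would substitute the definitions and split each correction term into three parts: a part linear in $\overset{g}{\nabla}\pi$, a part linear in $\overset{g}{\nabla}A$, and a part quadratic in $\pi$. Reindexing each cyclic sum separately ($X\to Y\to Z\to X$), the $\overset{g}{\nabla}\pi$-part collapses to $\mp\,\underset{XYZ}{\sigma}\big((\overset{g}{\nabla}_X\pi)(Y)-(\overset{g}{\nabla}_Y\pi)(X)\big)AZ$ and the $\overset{g}{\nabla}A$-part to $\pm\,\underset{XYZ}{\sigma}\pi(X)\big((\overset{g}{\nabla}_Y A)Z-(\overset{g}{\nabla}_Z A)Y\big)$, the signs being opposite for $\overset{2}{R}$ since the correction term in (\ref{eq:R2XYZQ-Snm1}) carries the opposite sign; the signs in (\ref{eq:R1XYZQ-Snm}) are arranged so that the cyclic sum leaves precisely this antisymmetrized remainder. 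The $A^2$-terms coming from $\overset{1}{\gamma},\overset{2}{\gamma}$ drop out because $\underset{XYZ}{\sigma}\pi(Z)(\pi(Y)A^2X-\pi(X)A^2Y)=0$ identically (its six monomials cancel in pairs), and the remaining quadratic-in-$\pi$ monomials combine into $-\tfrac12\underset{XYZ}{\sigma}(\pi(X)\pi(AY)-\pi(AX)\pi(Y))AZ$, which is the same for both connections. Collecting the three parts yields the first two displayed identities.

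For $\overset{3}{R}$ the same decomposition is used, but now the correction term in (\ref{eq:R3XYZQ-Snm1}) is built from a combination symmetric enough in its first two arguments that after cyclic summation the $\overset{g}{\nabla}\pi$-part and the $\overset{g}{\nabla}A$-part cancel completely; what survives is exactly $\underset{XYZ}{\sigma}(\pi(X)\pi(AY)-\pi(AX)\pi(Y))AZ$. For $\overset{4}{R}$ I would compare (\ref{eq:R4XYZQ-Snm1}) with (\ref{eq:R3XYZQ-Snm1}): they differ only by replacing $\overset{2}{\alpha}(X,Y)+\overset{1}{\alpha}(Y,X)$ with $\overset{1}{\alpha}(X,Y)+\overset{2}{\alpha}(Y,X)$ in the coefficient of $AZ$ and by the extra term $-\pi(Z)(\pi(Y)A^2X-\pi(X)A^2Y)$; using (\ref{eq:alpha1}) and (\ref{eq:alpha2}) this gives $\overset{4}{R}(X,Y)Z-\overset{3}{R}(X,Y)Z=-(\pi(X)\pi(AY)-\pi(AX)\pi(Y))AZ-\pi(Z)(\pi(Y)A^2X-\pi(X)A^2Y)$, whose cyclic sum is $-\underset{XYZ}{\sigma}(\pi(X)\pi(AY)-\pi(AX)\pi(Y))AZ$ (the $A^2$-term vanishing as before), exactly cancelling $\underset{XYZ}{\sigma}\overset{3}{R}$, so $\underset{XYZ}{\sigma}\overset{4}{R}(X,Y)Z=0$. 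Finally, the correction term in (\ref{eq:R5XYZQ-Snm1}) is purely quadratic in $\pi$; splitting it into the monomial types $\pi(\cdot)\pi(\cdot)A^2(\cdot)$ and $\pi(\cdot)\pi(A\cdot)A(\cdot)$ and cyclically summing each shows both cancel in pairs, hence $\underset{XYZ}{\sigma}\overset{5}{R}(X,Y)Z=0$.

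The only genuine obstacle is bookkeeping: the cyclic sums involve a large number of monomials, and one must reindex carefully and track signs to see the cancellations. No new geometric input is required beyond the first Bianchi identity for $\overset{g}{R}$ and the elementary identities for $A$, $\pi$, $F$ recalled above; in particular the repeatedly used fact $\underset{XYZ}{\sigma}\pi(Z)(\pi(Y)A^2X-\pi(X)A^2Y)=0$ is immediate once the six terms are written out, and the same pairing mechanism accounts for all the vanishings in the $\overset{4}{R}$ and $\overset{5}{R}$ cases.
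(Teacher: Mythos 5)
Your proposal is correct and follows exactly the route the paper takes: the paper's entire justification is the sentence preceding the theorem, namely that one cyclically sums the explicit expressions (\ref{eq:R1XYZQ-Snm})--(\ref{eq:R5XYZQ-Snm1}) and uses the first Bianchi identity for $\overset{g}{R}$, which is what you do. Your bookkeeping of the cancellations (in particular the identity $\underset{XYZ}{\sigma}\,\pi(Z)(\pi(Y)A^2X-\pi(X)A^2Y)=0$ and the comparison of $\overset{4}{R}$ with $\overset{3}{R}$) checks out.
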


If holds $\overset{1}{R} (X,Y)Z = \overset{2}{R} (X,Y)Z$, then connection $\overset{1}{\nabla}$ is \textit{dual symmetric}.

\begin{theorem}
	Quarter-symmetric non-metric connection $\overset{1}{\nabla}$ given with (\ref{eq:Q-Snmetric}) is dual symmetric connection if and only if tensor $V(X,Y)Z$ is symmetric with respect to vectors $X$ and $Y$, where $V$ is (1,3) type tensor given with
	\begin{equation}\label{eq:VXYZQ-Snm}
		V(X,Y)Z= \pi(X)(\overset{g}{\nabla}_{Y} A )Z  + \pi(Z)(\overset{g}{\nabla}_{X} A )Y + (\overset{g}{\nabla}_{X} \pi)(Z) AY - (\overset{g}{\nabla}_{X} \pi)(Y)AZ.
	\end{equation}
\end{theorem}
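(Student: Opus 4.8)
The plan is to read the claim off directly from the explicit formulas for the two curvature tensors obtained earlier. By definition, $\overset{1}{\nabla}$ is conjugate symmetric precisely when $\overset{1}{R}(X,Y)Z=\overset{2}{R}(X,Y)Z$ for all vector fields $X,Y,Z$, so the first step is to subtract equation (\ref{eq:R2XYZQ-Snm1}) from equation (\ref{eq:R1XYZQ-Snm}). The Riemannian curvature terms $\overset{g}{R}(X,Y)Z$ cancel at once, and one is left with a combination of the tensors $\overset{1}{\alpha},\overset{2}{\alpha},\overset{1}{\beta},\overset{1}{\gamma},\overset{2}{\gamma},\overset{1}{\delta}$ evaluated on $AX,AY,AZ$ and on $\pi(X),\pi(Y),\pi(Z)$.

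The second step is to simplify this combination using the defining relations (\ref{eq:alpha1}), (\ref{eq:alpha2}), (\ref{eq:gamma1}), (\ref{eq:gamma2}). The key point is that the correction terms carried by the dual connection $\overset{2}{\nabla}$ have the opposite sign to those of $\overset{1}{\nabla}$, so that
\[
\overset{1}{\alpha}(X,Y)+\overset{2}{\alpha}(X,Y)=2(\overset{g}{\nabla}_{X}\pi)(Y),\qquad \overset{1}{\gamma}(X,Y)+\overset{2}{\gamma}(X,Y)=2(\overset{g}{\nabla}_{X}A)Y,
\]
i.e. the terms quadratic in $\pi$ and the terms of type $\pi(\cdot)A^{2}(\cdot)$ cancel on addition. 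Combining this with $\overset{1}{\beta}(X,Y)=(\overset{g}{\nabla}_{X}\pi)(Y)-(\overset{g}{\nabla}_{Y}\pi)(X)$ and $\overset{1}{\delta}(X,Y)=(\overset{g}{\nabla}_{X}A)Y-(\overset{g}{\nabla}_{Y}A)X$, the difference $\overset{1}{R}(X,Y)Z-\overset{2}{R}(X,Y)Z$ reduces to an expression built solely from $\overset{g}{\nabla}\pi$, $\overset{g}{\nabla}A$, $\pi$ and $A$.

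The third and final step is to recognize that this expression is exactly $V(X,Y)Z-V(Y,X)Z$, where $V$ is the tensor defined in (\ref{eq:VXYZQ-Snm}); this is a term-by-term match once one writes out $V(Y,X)Z$. Consequently $\overset{1}{R}=\overset{2}{R}$ holds if and only if $V(X,Y)Z=V(Y,X)Z$, that is, if and only if $V$ is symmetric in its first two arguments, which is the assertion of the theorem.

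There is no genuine obstacle here — the argument is bookkeeping — but the one place to be careful is the sign and coefficient accounting when subtracting (\ref{eq:R2XYZQ-Snm1}) from (\ref{eq:R1XYZQ-Snm}): the $\overset{1}{\beta}AZ$ and $\overset{1}{\delta}\pi(Z)$ terms occur with the same magnitude but opposite signs in the two tensors and hence contribute $-\overset{1}{\beta}(X,Y)AZ$ and $+\overset{1}{\delta}(X,Y)\pi(Z)$ to the difference, whereas the $\overset{1}{\alpha}$/$\overset{2}{\alpha}$ and $\overset{1}{\gamma}$/$\overset{2}{\gamma}$ pairs combine as \emph{sums} as displayed above. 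Tracking these factors is precisely what makes the cancellation of all terms quadratic in $\pi$ transparent and delivers exactly the antisymmetrization of $V$ in its first two slots.
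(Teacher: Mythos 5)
Your proposal is correct and follows essentially the same route as the paper: subtract (\ref{eq:R2XYZQ-Snm1}) from (\ref{eq:R1XYZQ-Snm}), note that $\overset{1}{\alpha}+\overset{2}{\alpha}=2\overset{g}{\nabla}\pi$ and $\overset{1}{\gamma}+\overset{2}{\gamma}=2\overset{g}{\nabla}A$ while the $\overset{1}{\beta}$ and $\overset{1}{\delta}$ terms double with opposite signs, and identify the resulting difference with $V(X,Y)Z-V(Y,X)Z$. Your sign and coefficient bookkeeping matches the paper's displayed condition exactly, so nothing is missing.
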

\begin{proof}
	Based on equations (\ref{eq:R1XYZQ-Snm}) and (\ref{eq:R2XYZQ-Snm1}), the relation between curvature tensors $\overset{1}{R}$ and $\overset{2}{R}$ is easily obtained
	\begin{equation*}
		\begin{split}
			\overset{2}{R} (X,Y)Z = \overset{1}{R} (X,Y)Z  &- (\overset{g}{\nabla}_{X} \pi)(Z) AY + (\overset{g}{\nabla}_{Y} \pi)(Z) AX + \overset{1}{\beta}(X,Y)AZ  \\ & +\pi(Y)(\overset{g}{\nabla}_{X} A )Z - \pi(X)(\overset{g}{\nabla}_{Y} A )Z - \overset{1}{\delta}(X,Y) \pi(Z).
	\end{split}\end{equation*}
	From here we see that the connection $\overset{1}{\nabla}$ is dual symmetric if and only if the following holds
	\begin{equation*}
		\pi(X)(\overset{g}{\nabla}_{Y} A )Z - \pi(Y)(\overset{g}{\nabla}_{X} A )Z + \overset{1}{\delta}(X,Y) \pi(Z) + (\overset{g}{\nabla}_{X} \pi)(Z) AY - (\overset{g}{\nabla}_{Y} \pi)(Z) AX - \overset{1}{\beta}(X,Y)AZ=0,
	\end{equation*}
	i.e.
	\begin{equation*}
		V(X,Y)Z=V(Y,X)Z,
	\end{equation*}
	where $V$ is given with equation (\ref{eq:VXYZQ-Snm}).	
\end{proof}

\begin{remark}
	In paper \cite{dizhao2022}, the authors observed quarter-symmetric non-metric connection in the form
	\begin{equation*}
		\overset{1}{\nabla}_{X} Y = \overset{g}{\nabla}_{X} Y + \frac{1}{2}\pi(Y) \phi X - \frac{1}{2}\pi(X) \phi Y, 
	\end{equation*}
	where $\phi$ is (1,1) tensor, while in this paper we used (1,1) tensor $A$, which is associated with skew-symmetric tensor $F$. Also, the content of this paper is different than that of paper \cite{dizhao2022}.
\end{remark}

\section{Further work}

Considering that almost complex, almost para-complex, almost contact and almost para-contact manifolds are examples of a generalized Riemannian manifold, we will continue the research on this quarter-symmetric connection on the mentioned manifolds and deal with the application of the results obtained in this paper.

\section{Acknowledgement}
The financial support of this work by the project of the Ministry of Education, Science and Technological Development of the Republic of Serbia (project no. 451-03-65/2024-03/200123) and by project of Faculty of Sciences and Mathematics, University of Pri\v stina in Kosovska Mitrovica (internal-junior project IJ-2303).

\end{document}